\DeclareFontFamily{OMX}{MnSymbolE}{}
\DeclareSymbolFont{MnLargeSymbols}{OMX}{MnSymbolE}{m}{n}
\DeclareFontShape{OMX}{MnSymbolE}{m}{n}{
    <-6>  MnSymbolE5
   <6-7>  MnSymbolE6
   <7-8>  MnSymbolE7
   <8-9>  MnSymbolE8
   <9-10> MnSymbolE9
  <10-12> MnSymbolE10
  <12->   MnSymbolE12
}{}
\DeclareFontShape{OMX}{MnSymbolE}{b}{n}{
    <-6>  MnSymbolE-Bold5
   <6-7>  MnSymbolE-Bold6
   <7-8>  MnSymbolE-Bold7
   <8-9>  MnSymbolE-Bold8
   <9-10> MnSymbolE-Bold9
  <10-12> MnSymbolE-Bold10
  <12->   MnSymbolE-Bold12
}{}
\let\llangle\@undefined
\let\rrangle\@undefined
\DeclareMathDelimiter{\llangle}{\mathopen}%
                     {MnLargeSymbols}{'164}{MnLargeSymbols}{'164}
\DeclareMathDelimiter{\rrangle}{\mathclose}%
                     {MnLargeSymbols}{'171}{MnLargeSymbols}{'171}
\theoremstyle{plain} 
\newtheorem{theorem}{Theorem}[section]
\newtheorem{proposition}[theorem]{Proposition}
\newtheorem{lemma}[theorem]{Lemma} 
\theoremstyle{definition} 
\newtheorem{definition}[theorem]{Definition}
\theoremstyle{remark} 
\newtheorem{remark}[theorem]{Remark}
\definecolor{shadecolor}{rgb}{1,0.8,0.3}
\newcommand{\IGNORE}[1]{}
\newcommand{\C}{\mathscr{C}}
\newcommand{\E}{\mathcal{E}}
\renewcommand{\L}{\mathscr{L}}
\newcommand{\M}{\mathscr{M}}
\newcommand{\N}{\mathbb{N}}
\newcommand{\Q}{\mathbb{Q}}
\newcommand{\R}{\mathbb{R}}
\renewcommand{\S}{\mathrm{sym}}
\newcommand{\T}{\mathrm{T}}
\newcommand{\W}{\mathscr{W}}
\newcommand{\AF}{{\mathfrak{A}}}
\newcommand{\BL}{\mathrm{BL}}
\newcommand{\BM}{{\boldsymbol{m}}} 
\newcommand{\BU}{\boldsymbol{u}}
\newcommand{\BV}{\boldsymbol{v}}
\newcommand{\BW}{\boldsymbol{w}}
\newcommand{\CB}{\C_\mathrm{b}}
\newcommand{\DR}{\dot{\R}}
\newcommand{\FX}{\mathfrak{x}}
\newcommand{\GS}{\geqslant}
\newcommand{\HA}{{\TST\frac{1}{2}}}
\newcommand{\ID}{\mathrm{id}}
\newcommand{\LM}{\boldsymbol{\lambda}}
\newcommand{\LS}{\leqslant}
\newcommand{\MK}{\M_\BL}
\newcommand{\PP}{\mathbb{P}}
\newcommand{\PR}{\boldsymbol{P}}
\newcommand{\SP}{\mathscr{P}}
\newcommand{\ADM}{\mathrm{Adm}}
\newcommand{\ALG}{\mathscr{A}}
\newcommand{\BBE}{\mathbb{E}}
\newcommand{\BFX}{\mathfrak{X}}
\newcommand{\BVS}{\mathrm{BV}}
\newcommand{\DDC}{\boldsymbol{Q}}
\newcommand{\DDP}{\phi}
\newcommand{\DSC}{\boldsymbol{R}}
\newcommand{\DSP}{\chi}
\newcommand{\DST}{\displaystyle}
\newcommand{\EPS}{\varepsilon}
\newcommand{\INT}{\mathcal{U}}
\newcommand{\LEB}{\mathcal{L}}
\newcommand{\LIP}{\mathrm{Lip}}
\newcommand{\LOC}{\mathrm{loc}}
\newcommand{\MAT}[2][]{%
	\ifstrempty{#1}{%
		\mathrm{Mat}_{#2}(\R)}{%
		\mathrm{Mat}_{#2}(\R, #1)}}
\newcommand{\ONE}{\boldsymbol{1}}
\newcommand{\PHI}{\boldsymbol{\phi}}
\newcommand{\RHO}{\varrho}
\newcommand{\SYM}[2][]{%
	\ifstrempty{#1}{%
		\mathrm{Sym}_{#2}(\R)}{%
		\mathrm{Sym}_{#2}(\R, #1)}}
\newcommand{\TST}{\textstyle}
\newcommand{\WAS}{\mathrm{W}}
\newcommand{\FLUX}{\mathbb{U}}
\newcommand{\PREC}{\preccurlyeq}
\newcommand{\SKEW}[1]{{\mathrm{Skew}_{#1}(\R)}}
\newcommand{\WEAK}{\DOTSB\protect\relbar\protect\joinrel\rightharpoonup}
\newcommand{\GAMMA}{\boldsymbol{\gamma}}
\newcommand{\TRACE}{{\mathrm{tr}}}
\DeclareMathOperator{\REL}{\square}
\DeclareMathOperator*{\ESUP}{\mathrm{ess\,sup}}
\numberwithin{equation}{section}
\title 
    [Minimal acceleration for the isentropic Euler equations] 
	{Minimal acceleration for the multi-dimensional isentropic Euler equations}
\author
    {Michael Westdickenberg}
\address
    {Michael Westdickenberg,
     Lehrstuhl f\"{u}r Mathematik (Analysis),
     RWTH Aachen University,
     Templergraben 55,
     D-52062 Aachen, 
     Germany}
\email
    {mwest@instmath.rwth-aachen.de}
\date{\today}
\subjclass[2010] 
    {35L65, 
	35A15,	
	35D30,  
	35L03}	
\keywords 
    {Hyperbolic conservation laws, dissipative solutions, selection}
\begin{document}

\begin{abstract} 
On the set of dissipative solutions to the multi-dimensional isentropic Euler
equations, we introduce a quasi-order by comparing the acceleration at all times.
This quasi-order is continuous with respect to a suitable notion of convergence
of dissipative solutions. We establish the existence of minimal elements.
Minimizing the acceleration amounts to selecting dissipative solutions that are
as close to being a weak solution as possible.
\end{abstract}

\maketitle 


\section{Introduction}

The isentropic Euler equations
\begin{equation}
  	\left.\begin{array}{r}
	    \displaystyle
    	\partial_t\RHO
      		+\nabla\cdot(\RHO\BU) = 0
\\[1ex]
    	\displaystyle
    	\partial_t(\RHO\BU)
      		+ \nabla\cdot(\RHO\BU\otimes\BU) 
      		+ \nabla P(\RHO) = 0
  	\end{array}\right\}
  	\quad\text{in $[0,\infty)\times\R^d$}
\label{E:IEE}
\end{equation}
model the evolution of compressible gases. The unknowns $(\RHO,\BU)$ depend on
time $t\in[0,\infty)$ and space $x\in\R^d$. We assume that suitable initial data
is given:
\[
	(\RHO,\BU)(0,\cdot) =: (\bar{\RHO},\bar{\BU}).
\]
We will think of $\RHO$ as a map from $[0,\infty)$ into the space of
non-negative, finite Borel measures, which we denote by $\M_+(\R^d)$. The
quantity $\RHO$ is called the density and it represents the distribution of mass
in time and space. The first equation in \eqref{E:IEE} (the continuity equation)
expresses the local conservation of mass, where
\begin{equation}
    \BU(t,\cdot) \in \L^2\big( \R^d,\RHO(t,\cdot) \big)
    \quad\text{for all $t\in[0,\infty)$}
\label{E:VELOC}
\end{equation}
is the Eulerian velocity field, which takes values in $\R^d$. The second
equation in \eqref{E:IEE} (the momentum equation) expresses the local
conservation of momentum $\BM := \RHO\BU$. Notice that $\BM(t,\cdot)$ is a
finite $\R^d$-valued Borel measure absolutely continuous with respect to
$\RHO(t,\cdot)$ for all $t\in[0,\infty)$, because of \eqref{E:VELOC}. Without
loss of generality, we will asume that $\bar{\RHO}$ has mass one. It then
follows from the continuity equation that for all $t$ we have $\RHO(t,\cdot) \in
\SP(\R^d)$, the space of Borel probability measures.

In order to obtain a closed system \eqref{E:IEE} it is necessary to prescribe an
equation of state, which determines the pressure. For the isentropic Euler
equations, where the thermodynamical entropy is assumed constant in time and
space, the pressure is just a function of the density. We will consider
polytropic gases:

\begin{definition}[Internal Energy]\label{D:INT} 
Let $U(r) := \kappa r^\gamma$ for all $r\GS 0$, where $\kappa>0$ and $\gamma>1$
are constants. For all $\RHO \in \SP(\R^d)$ we define the internal energy
\[
  	\INT[\RHO] := \begin{cases}
    	\DST \int_{\R^d} U\big( r(x) \big) \,dx
        	& \text{if $\RHO=r\LEB^d$,}
\\
      	+\infty
        	& \text{otherwise.}
    \end{cases}
\]
Here $\LEB^s$ is the $d$-dimensional Lebesgue measure on $\R^d$.
\end{definition}

The constant $\gamma$ is called the adiabatic coefficient. Since we are only
interested in solutions of \eqref{E:IEE} with finite energy, the density
$\RHO(t,\cdot)$ must be absolutely continuous with respect to the Lebesgue
measure for all $t\in[0,\infty)$. Let $r(t,\cdot)$ be its
Lebesgue-Radon-Nikod\'{y}m derivative. Then $p(t,\cdot) = P\big( r(t,\cdot)
\big) \,\LEB^d$ for all $t\in[0,\infty)$, where
\[
    P(r) = U'(r)r-U(r) 
    \quad\text{for $r\GS 0$.}
\]
For simplicity, we will often not distringuish between $\RHO$ and its Lebesgue
density.

Smooth solutions $(\RHO,\BU)$ of \eqref{E:IEE} satisfy the additional
conservation law
\begin{equation}
    \partial_t \Big( \HA\RHO|\BU|^2 + U(\RHO) \Big) + \nabla \cdot 
        \bigg( \Big( \HA\RHO|\BU|^2 + U'(\RHO)\RHO \Big) \BU \bigg) 
            = 0,
\label{E:EE}
\end{equation}
which expresses local conservation of total energy
\[
	E(\RHO,\BU) := \HA \RHO|\BU|^2 + U(\RHO),
\]
which is the sum of kinetic and internal energy. It is well-known, however, that
a generic solution to the isentropic Euler equations will not remain regular,
even for smooth initial data. Instead the solution will develop jump
discontinuities along codimension-one submanifolds in space-time, which are
called shocks. In this case, continuity and momentum equation must be considered
in the sense of distributions, and the energy equation \eqref{E:EE} does not
follow automatically. A physically reasonable relaxation is to assume that no
energy can be created by the fluid: Then the energy equality in \eqref{E:IEE}
must be replaced by the inequality
\begin{equation}
    \partial_t \Big( \HA\RHO|\BU|^2 + U(\RHO) \Big) + \nabla \cdot 
        \bigg( \Big( \HA\RHO|\BU|^2 + U'(\RHO)\RHO \Big) \BU \bigg) 
            \LS 0
\label{E:ED}
\end{equation}
distributionally. Strict inequality in \eqref{E:ED} means that mechanical energy
is transformed into a form of energy not accounted for by the model, such as
heat.

A differential inequality like \eqref{E:ED} contains some information on the
regularity of solutions: The space-time divergence of a certain non-linear
function of $(\RHO,\BU)$ is a non-positive distribution, and therefore a
measure. In the one-dimensional case, one usually requires that weak solutions
of \eqref{E:IEE} satisfy differential inequalities analogous to \eqref{E:ED} for
a large class of non-linear functions of $(\RHO,\BU)$ called entropy-entropy
flux pairs. Such an assumption is called an entropy condition. Utilizing the
method of compensated compactness, it is then possible to establish the global
existence of weak (entropy) solutions of \eqref{E:IEE}; see\cite{Chen2000,
DingChenLuo1987, DingChenLuo1988, ChenLeFloch2000, ChenPerepelitsa2010,
DiPerna1983, LeFlochWestdickenberg2007, LionsPerthameSouganidis1996,
LionsPerthameTadmor1994}.

In several space dimensions the only available entropy-entropy flux pair is the
total energy and the correspondig energy flux. The compensated compactness
technique cannot be applied. One can, however, establish the existence of a
large set of initial data for which weak solutions of \eqref{E:IEE} exist
globally in time, by using non-linear iteration schemes like the ones introduced
by Nash \cite{Nash1954, Nash1956} in the context of isometric embeddings of
Riemannian manifolds. We refer the reader to the ground-breaking work by De
Lellis and Sz\'{e}kelyhidi \cite{DeLellisSzekelyhidi2009,
DeLellisSzekelyhidi2010} and subsequent extensions \cite{Chiodaroli2014,
ChiodaroliKreml2014, ChiodaroliDeLellisKreml2015, Feireisl2014} by various
authors. These results even show that for suitable initial data there exist
\emph{infinitely many} weak solutions of \eqref{E:IEE}, even if one requires
that solutions satisfy an entropy condition in the form \eqref{E:ED}. This is
related to the fact that---in addition to energy dissipation through
shocks---there is an additional dissipation mechanism due to very high
oscillations of the velocity field, which is reminiscent of anomalous
dissipation in turbulence. Moreover, there is a precise threshold of H\"{o}lder
regularity $1/3$ between energy conserving and energy dissipating regimes. For
incompressible flows, this has been conjectured based on physical considerations
by Onsager \cite{Onsager1949}. A mathematical proof of this conjecture has been
provided in a series of recent articles; see \cite{Buckmaster2015,
BuckmasterDeLellisSzekelyhidi2016, Isett2018} and references therein. For
related results for the compressible Euler equations see
\cite{FeireislGwiazdaSwierczewskaGwiazdaWiedemann2017}. The \emph{Cauchy
problem} for \eqref{E:IEE} in several space dimensions has not been solved yet:
In order to apply the above methods for \emph{given} initial data, it is
necessary to allow a small increase in energy initially, which violates
\eqref{E:ED}. 

We do not know whether weak solutions of \eqref{E:IEE} always exist. Clearly
there is no uniqueness. It is conceivable that there is initial data for which
no well-defined weak solution can be found, as numerical experiments in
\cite{LimYuGlimmLiSharp2008, LimIwerksGlimmSharp2010, FjordholmMishraTadmor2016}
involving particular situations such as Kelvin-Helmholtz instabilities have
suggested. It has been argued that the apparent non-convergence of numerical
solutions under successive mesh refinement necessitates the use of solution
concepts that are weaker than weak solutions, like measure-valued or dissipative
solutions; see also \cite{FjordholmMishraTadmor2016,
FeireislLukacovaMedvidovaMizerova2020}.

\subsection*{Dissipative solutions}

In order to construct approximate solutions of \eqref{E:IEE}, we will use the
variational time discretization introduced in
\cite{CavallettiSedjroWestdickenberg2019}, which is inspired by minimizing
movement schemes. For given timestep $\tau>0$ it generates approximate solutions
at discrete times $t_\tau^k := k\tau$ with $k\in \N_0$ by solving an
optimization problem for each step, minimizing the sum of a suitable work
functional plus the internal energy. This can be interpreted as maximizing the
entropy production; see \cite{CavallettiSedjroWestdickenberg2019}. The work
functional quantifies how much mass points, which at the beginning of the time
step have a well-defined location and velocity, deviate from the free transport
path, i.e., it measures their acceleration. It is a second-order replacement of
the Wasserstein distance, which plays a central role in the interpretation of
certain parabolic equations as gradient flows on the space of probability
measures; see \cite{AmbrosioGigliSavare2008}. 

Having determined approximate solutions of \eqref{E:IEE} at discrete times
$t_\tau^k$, we then interpolate in time to obtain a curve, i.e., a map $t
\mapsto (\RHO_\tau, \BM_\tau)(t,\cdot)$ into the state space, for all $t \in
[0,\infty)$. There are different ways to do this. One possibility is to
interpolate piecewise constantly in time so that the approximate solution jumps
at $t_\tau^k$. Another one, which was not considered in
\cite{CavallettiSedjroWestdickenberg2019}, utilizes a piecewise linear
interpolation of the transport map and the velocity. We will need both
interpolations. The second one provides a tighter control of the total energy;
see Remark~\ref{R:CHANGES}.

We then consider the limit $\tau \rightarrow 0$ and obtain, extracting
subsequences as necessary, a limit density/momentum $(\RHO,\BM)$, which is the
candidate for a solution of \eqref{E:IEE}. As in
\cite{CavallettiSedjroWestdickenberg2019}, we use Young measures to capture the
weak limits of non-linear functions of $(\RHO_\tau, \BM_\tau)$ as $\tau\to 0$.
Since we consider different time interpolations, we obtain two Young measures
$(\epsilon, \nu)$. Young measures are elements of the dual space
\[
  	\BBE^* := \L^\infty_w\big( [0,\infty), \M(\dot\R^d\times\BFX) \big),
\]
which is the space of weakly measurable, essentially bounded maps from
$[0,\infty)$ into the space of finite Borel measures on $\dot\R^d\times\BFX$.
Here $\dot\R^d$ is the one-point compacti\-fication of the physical space, and
$\BFX$ is a suitable compactification of the state space; see
Section~\ref{SS:YM}. In the limit $\tau\to 0$, we then have the convergence
\[
	\BM_\tau\otimes\BM_\tau / \RHO_\tau 
		\WEAK \lbrack \BM\otimes\BM / \RHO \rbrack
	\quad\text{and}\quad
	P(\RHO_\tau) \WEAK \llbracket P(\RHO) \rrbracket
\]
in a suitable topology, where the brackets $\lbrack \cdot \rbrack$ and
$\llbracket \cdot \rrbracket$ denote the pairing with Young measures $\nu$ and
$\epsilon$, respectively. We refer the reader to Section~\ref{SS:YM} for
details. Then $(\RHO,\BM)$ satisfy the continuity equation and a modified
momentum equation
\begin{equation}
	\partial_t \BM + \nabla \cdot \FLUX = 0
	\quad\text{with}\quad
	\FLUX := \lbrack \BM\otimes\BM / \RHO \rbrack
		+ \llbracket P(\RHO) \rrbracket \ONE.
\label{E:YMR}
\end{equation}
We will call such pairs of Young measures \emph{dissipative solutions},
borrowing the term from \cite{BreitFeireislHofmanova2020a}. In that paper,
dissipative solutions are defined as tuples of $(\RHO,\BM)$ and \textbf{defect
measures} $\DSC, \DDP$ that satisfy the continuity equation and
\begin{equation}
    \partial_t \BM
        + \nabla \cdot \bigg( \frac{\BM\otimes\BM}{\RHO} + P(\RHO)\ONE \bigg)
        + \fbox{$\nabla \cdot (\DSC + \DDP\ONE)$} = 0
\label{E:MOM2}
\end{equation}
distributionally. Here $\DSC$ and $\DDP$ are measures taking values in the
symmetric, positive semidefinite matrices and the non-negative numbers,
respectively. Moreover
\begin{equation}
    \frac{d}{dt} \int_{\R^d} \Bigg(
        \HA \RHO|\BU|^2 + U(\RHO) 
            +\fbox{$\DST \HA \TRACE(\DSC) + \frac{1}{\gamma-1} \DDP$}
                \, \Bigg) (t,dx) \LS 0
\label{E:EIG}
\end{equation}
distributionally, which provides a bound on the sizes of $\DSC$ and $\DDP$. The
dissipative solutions of \cite{BreitFeireislHofmanova2020a} have the
\emph{weak-strong uniqueness property}, meaning that dissipative solutions
coincide with strong solutions of \eqref{E:IEE} as long as the latter exist. We
will show in Proposition~\ref{P:EXISTENCE} that our momentum flux has a similar
decomposition
\begin{equation}
  \FLUX = \bigg( \frac{\BM\otimes\BM}{\RHO} + P(\RHO)\ONE \bigg)
    + \Big( \DSC + \DDP\ONE \Big),
\label{E:ACCI}
\end{equation}
with defect measures $\DSC$ and $\DDP$ that are positive semidefinite as above.
This follows from Jensen's inequality and another Young measure representation.
We prefer to think of dissipative solutions as pairs of Young measures
$(\epsilon, \nu)$ because these come with a natural topology of weak*
convergence that is very useful for studying the behavior of non-linear
functions of density/momentum; see Section~\ref{S:MA}. For a weak-strong
uniqueness result for solutions of the isentropic Euler equations with possible
occurrence of vacuum we refer the reader to \cite{GhoshalJanaWiedemann2021}.

\subsection*{Time regularity}

As is well-known, the underlying local geometry plays a central role in defining
(generalized) gradient flows. In the classical case of a gradient flow of some
functional $E$ defined on a Riemannian manifold $(\mathcal{M},g)$, for example,
the Riemannian metric $g$ is used to associate to the differential $dE(x)$ at a
point $x \in \mathcal{M}$, which is a cotangent vector, the gradient $\nabla
E(x)$, which is tangent. Then
\begin{equation}
	\dot\gamma(t) = -\nabla E\big( \gamma(t) \big)
	\quad\text{for $t \in [0,\infty)$}
\label{E:GFE}
\end{equation}
makes sense as an equality of tangent vectors. For generalized gradient flows on
the space of probability measures, the Wasserstein distance induces a suitable
notion of tangent vectors as the set of square-integrable velocity fields
$\BV$ such that
\begin{enumerate}[label=(\arabic*)]
\item the continuity equation $\partial_t \RHO = - \nabla\cdot(\RHO\BV)$ is
satisfied distributionally;
\item the $\L^2(\RHO_t)$-norm of $\BV_t$ is minimal, for a.e.\ $t\in [0,\infty)$.
\end{enumerate}
Here $t \mapsto \RHO_t$ with $t\in [0,\infty)$ is a sufficiently smooth curve of
probability measures. We use the subscript $t$ to indicate the evaluation at
time $t$. Notice that the continuity equation does not uniquely determine
$\BV_t$. It is always possible to add another square-integrable vector field
$\BW_t$ with $\nabla\cdot(\RHO_t\BW_t) = 0$ without losing the equality. The
minimization of the $\L^2(\RHO_t)$-norm in (2) amounts to defining tangent
vectors to the space of probability measures as (limits of) \emph{gradient}
vector fields. 

If the functional $E$ is convex, but non-smooth, then the gradient flow equation
\eqref{E:GFE} must be relaxed to $-\dot\gamma(t)$ being contained in the
\emph{subdifferential} of $E$ at the point $\gamma(t)$, which is a set-valued
map. The evolution typically selects the elements of minimal length (norm) in
this subdifferential; see \cite{Brezis1973, AmbrosioGigliSavare2008}. In this
sense, gradients flows combine the idea of steepest descent (proceed in the
direction of the subdifferential, so that the functional decreases as fast as
possible) and the idea of slowest evolution (among the vectors in the
subdifferential select the one with minimal norm).

When studying a variational approach for the isentropic Euler equation
\eqref{E:IEE}, the question arises what is the ``correct'' metric structure for
the space of momenta, which are vector-valued Borel measures. We propose to use
the bounded Lipschitz norm (also known as Monge-Kantorovich norm). It is defined
by testing against bounded, Lipschitz continuous functions with norm less than
or equal to one; see Definition~\ref{D:BLNOM}. The topological dual of the space
of bounded, Lipschitz continuous functions is very large, containing besides
bounded measures also certain distributions of first order. However, the set of
measures with \emph{total variation uniformly bounded} (e.g., because of an
energy bound) turns out to be \emph{closed} in this dual space. This is
well-known in the case of probability measures where the bounded Lipschitz norm
metrizes the weak* convergence of measure; see \cite{Dudley1966}. A similar
thing happens here. We refer the reader to \cites{Pachl1979, Pachl2013,
ChitescuMikulescuNitaIoana2016, HilleSzarekWormZiemlanska2021} for further
information.

The use of the bounded Lipschitz norm is very well adapted to the momentum
equation $\partial_t \BM = -\nabla\cdot \FLUX$, which provides an expression for
the \emph{time derivative} of the momentum. To measure its size we test against
bounded, Lipschitz continuous functions, integrate by parts, then take the
supremum over all test functions with norm not bigger than $1$. Since $\FLUX$
takes values in the symmetric, positive semi\-definite matrices, this turns out
to be very simple: We just need to integrate the trace $\TRACE(\FLUX)$ to obtain
an upper bound. Since the total energy is bounded for all times, it follows that
the momentum $t \mapsto \BM_t$ is \emph{Lipschitz continuous} with respect to
the bounded Lipschitz norm. The Lipschitz continuity of $t \mapsto \RHO_t$ with
respect to the Wasserstein distance follows as in
\cite{AmbrosioGigliSavare2008}. We refer the reader to Section~\ref{SS:TR} for
details.

\subsection*{Minimal acceleration}

In addition to the bounded Lipschitz norm, there is another quantity that is
significant for our discussion. Recall that if $(\mathscr{S}, d)$ is a complete
metric space, then a curve $t \mapsto v(t)$ into $\mathscr{S}$ is called
\emph{absolutely continuous} if there exists an integrable function $m \in
\L^1_\LOC([0,\infty))$ such that
\begin{equation}
	d\big( v(t), v(s) \big) \LS \int_s^t m(r) \,dr
	\quad\text{for all $0\LS s\LS t <\infty$.}
\label{E:MEDER}
\end{equation}
One can show that for any absolutely continuous curve $v$ in $\mathscr{S}$ the
limit
\[
	|v'|(t) := \lim_{s\to t} \frac{d\big(v(s),v(t)\big)}{|s-t|}
	\quad\text{exists for a.e.\ $t\in [0,\infty)$,}
\]
and $|v'|$ is the smallest function one can use in place of $m$ on the
right-hand side of \eqref{E:MEDER}; see Theorem~1.1.2 in
\cite{AmbrosioGigliSavare2008}. This $|v'|$ is called the \emph{metric
derivative} of $v$.

We apply this construction to the set of momentum fields $\BM$, which are
vector-valued Borel measures. Because of the energy bound, the total variations
of these momentum measures are bounded uniformly in time. Moreover, it suffices
to consider measures with prescribed average: by using the constant as a test
function in the momentum equation in \eqref{E:IEE}, one finds that the spatial
integral of $\BM$ (the total momentum) is preserved in time. On momentum curves
$t\mapsto \BM_t$, we define
\begin{equation}
	d\big( \BM_s, \BM_t \big) := \sup\bigg\{ 
		\int_{\R^d} \zeta(x) \cdot \big( \BM_s(dx)-\BM_t(dx) \big) \colon
			\|\zeta\|_{\LIP(\R^d)} \LS 1 \bigg\}
\label{E:DULIP}
\end{equation}
for $0\LS s\LS t$. Note that the difference between \eqref{E:DULIP} and the
distance induced by the bounded Lipschitz norm is that in \eqref{E:DULIP} we do
not require the $\sup$-norm of the test function to be bounded by one, only its
Lipschitz semi-norm. This is sufficient because $\BM_s$ and $\BM_t$ have the
same total momentum; testing their difference against the constant function
returns zero. We refer the reader to \cite{ChitescuMikulescuNitaIoana2016} for a
related discussion. We will show in Lemma~\ref{L:MOMS} that, with momentum flux
\[
	\FLUX(t,dx) = \lbrack \BM\otimes\BM/\RHO 
		\rbrack(t,dx) + \llbracket P(\RHO) \rrbracket(t,dx) \ONE,
\]
the metric derivative of $t\mapsto\BM_t$ induced by the distance \eqref{E:DULIP}
satisfies
\[
	|\BM'|(t) = \int_{\DR^d} \TRACE\big( \FLUX(t,dx) \big)
	\quad\text{for a.e.\ $t\in[0,\infty)$.}
\]
In the following, we will be interested in dissipative solutions of
\eqref{E:IEE} that minimize the metric derivative $|\BM'|$. We also say we try
to \emph{minimize the acceleration}. 

Given the fluid state $(\RHO_t,\BM_t)$ at some time $t$, there could be many
admissible momentum fluxes $\FLUX$ that are consistent with the notion of
dissipative solutions. This set, however, has the shape of a convex cone. Indeed
we have \eqref{E:ACCI} with non-negative defect measures $\DSC$ and $\DDP$ as
discussed above. It is therefore not possible to make the momentum flux $\FLUX$
arbitrarily small. It is only possible to bring the momentum flux as close as
possible to the vertex $\BM\otimes\BM/\RHO + P(\RHO) \ONE$ of the convex cone.
If we could reach this vertex for a.e.\ $(t,x)$, then the dissipative solution
would actually be a \emph{weak} solution. We refer to reader to
\cite{FeireislHofmanova2020} for a similar discussion. 

\medskip

Therefore our goal here is to select from the set of dissipative solutions such
solutions that \textbf{minimize the acceleration for almost all times}. 

\medskip

This problem is structurally very similar to multi-objective optimization, which
is an optimization problem that involves \emph{multiple} objective functions
$f_1(x), \ldots, f_k(x)$ defined on some set $X$. In this paper, we are
interested in minimizing the metric derivative (acceleration) for almost all
times, so our objective functions are indexed by $t$, and minimization is done
over the set of dissipative solutions.

Since in multi-objective optimization it is typically not possible to find an $x
\in X$ that \emph{minimizes all $f_i(x)$ simultaneously}, different strategies
have been devised to deal with such problems. One strategy consists in
prioritizing the objective functions (or combinations thereof) and minimizing
iteratively, meaning that one selects the minimizers of $f_{i+1}$ not from all
of $X$, but \emph{from the set of minimizers of $f_i$}, which has been selected
in the previous step, for $i=1\ldots k-1$. This procedure can also work for
countably many objective functions: Since the sets of minimizers are
\emph{nested} it is sufficient to prove that each such set is non-empty and
compact. Then the intersection of all sets is non-empty, because of the Cantor
intersection theorem. Typically the procedure is sensitive to the
\emph{ordering} of the objective functions.

An alternative approach to multi-objective optimization is to replace the goal
of finding a minimizer by the goal of finding a \emph{minimal element}. That
means, to find an element $m \in X$ that cannot be improved in any of the
objective functions without making things worse for another objective. Such
elements are called \emph{Pareto optimal}. More precisely, we say that $x_1 \in
X$ Pareto dominates $x_2 \in X$ if
\begin{equation}
\begin{aligned}
  f_i(x_1) \LS f_i(x_2) 
    & \quad\text{for all $i \in \{1, \ldots, k\}$,}
\\
  f_j(x_1) < f_j(x_2)
    & \quad\text{for at least one $j \in \{1, \ldots, k\}$.}
\end{aligned}
\label{E:PARETO}
\end{equation}
We say that $m \in X$ is Pareto optimal if there exists no $x \in X$ that Pareto
dominates $m$. This can be rephrased as saying that $m$ is a \emph{minimal
element} with respect to the quasi-order defined by \eqref{E:PARETO}. Note that
there may be pairs $x_1, x_2 \in X$ that cannot be compared in the sense that
neither $x_1$ Pareto dominates $x_2$, nor vice versa. Moreover, \eqref{E:PARETO}
typically does not define a partial order because $f_i(x_1) = f_i(x_2)$ for all
$i$ may not imply that $x_1 = x_2$. We refer the reader to Section~\ref{S:QOS}
for more.

\medskip

For suitable initial data $(\bar{\RHO}, \bar{\BM})$, let $S$ be the set of
dissipative solutions with this initial data; see Definition~\ref{D:DISSI}. For
any $(\epsilon, \nu) \in S$ and all $t\in[0,\infty)$ we define
\begin{equation}
	a(t|\epsilon, \nu) := 
	\int_{\DR^d} \Big(	\TRACE\big( \lbrack \BM\otimes\BM/\RHO 
			\rbrack(t,dx) \big)	+ d \llbracket P(\RHO) \rrbracket(t,dx) \Big).
\label{E:INTRACE}
\end{equation}
The map $t \mapsto a(t|\epsilon, \nu)$ is in $\L^\infty([0,\infty))$ because of
Definition~\ref{D:DISSI}~\eqref{L:EPSILON} and \eqref{L:NU}.

\begin{definition}\label{D:QO} 
We define a quasi-order $\PREC$ on $S$: We say that $(\tilde\epsilon,
\tilde\nu) \PREC (\epsilon, \nu)$ if
\[
	a(t|\tilde\epsilon, \tilde\nu) \LS a(t|\epsilon, \nu)
	\quad\text{for a.e.\ $t\in [0,\infty)$.}
\]
\end{definition}

It is straightforward to check that $\PREC$ is reflexive and transitive (see
Section~\ref{S:QOS}); it may not be antisymmetric. Then we have the following
result:

\begin{theorem}
\label{T:SELECTION}
For any $(\bar{\RHO}, \bar{\BM})$ as in \eqref{E:DATA}, let $S$ be the set of
dissipative solutions  of the isentropic Euler equations \eqref{E:IEE} with
initial data $(\bar{\RHO}, \bar{\BM})$. Then there exists a minimal element
$(\check\epsilon, \check\nu) \in S$ with respect to the quasi-order $\PREC$ of
Definition~\ref{D:QO}: For any $(\epsilon, \nu) \in S$, if $(\epsilon, \nu)$
and $(\check\epsilon, \check\nu)$ are comparable at all, then 
\[
	a(t|\check\epsilon, \check\nu) \LS a(t|\epsilon, \nu)
	\quad\text{for a.e.\ $t\in[0,\infty)$.}
\]
In this sense, the solution $(\check\epsilon, \check\nu)$ minimizes the
acceleration.
\end{theorem}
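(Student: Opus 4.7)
The plan is to reduce this apparently multi-objective problem to the minimization of a single scalar functional, exploiting the fact that the quasi-order $\PREC$ compares non-negative functions pointwise a.e. To this end, I introduce
\[
	F(\epsilon, \nu) := \int_0^\infty e^{-t}\, a(t|\epsilon, \nu) \, dt
	\quad\text{for $(\epsilon, \nu) \in S$,}
\]
which is finite since $a(\cdot|\epsilon, \nu) \in \L^\infty([0,\infty))$ by Definition~\ref{D:DISSI}. The weight $e^{-t}$ is strictly positive and the integrand is non-negative, so $F$ is monotone along $\PREC$, and moreover the inequality $F(\epsilon_1, \nu_1) \LS F(\epsilon_2, \nu_2)$ coming from $(\epsilon_1, \nu_1) \PREC (\epsilon_2, \nu_2)$ is an equality if and only if $a(t|\epsilon_1, \nu_1) = a(t|\epsilon_2, \nu_2)$ for a.e.\ $t$.

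The next step is to show that $F$ attains its infimum on $S$. Here I would invoke the compactness and continuity framework developed earlier in the paper. The set $S$ is sequentially compact for weak* convergence in $\BBE^*$ once the energy inequality built into the definition of dissipative solution is combined with the weak* compactness of bounded sets of Young measures; one must check along the way that the defect measures $\DSC, \DDP$ in the decomposition \eqref{E:ACCI} remain positive semidefinite under weak* limits, so that the limit is genuinely in $S$. Moreover, \eqref{E:INTRACE} expresses $a(t|\epsilon, \nu)$ as an integral of non-negative Young-measure quantities, which under weak* convergence is lower semicontinuous for a.e.\ $t$. Combined with the uniform $\L^\infty$ bound on $a$, Fatou's lemma yields lower semicontinuity of $F$. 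The direct method of the calculus of variations then produces a minimizer $(\check\epsilon, \check\nu) \in S$.

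It remains to verify that $(\check\epsilon, \check\nu)$ is minimal in the sense of the theorem. Let $(\epsilon, \nu) \in S$ be comparable to $(\check\epsilon, \check\nu)$. If $(\check\epsilon, \check\nu) \PREC (\epsilon, \nu)$ then the desired inequality is built into the definition of $\PREC$, and there is nothing further to prove. In the remaining case $(\epsilon, \nu) \PREC (\check\epsilon, \check\nu)$, monotonicity of $F$ gives $F(\epsilon, \nu) \LS F(\check\epsilon, \check\nu)$, but minimality of $F$ at $(\check\epsilon, \check\nu)$ forces equality. Since $e^{-t} > 0$ everywhere and the integrand $a(t|\check\epsilon, \check\nu) - a(t|\epsilon, \nu)$ is non-negative, it must vanish a.e., and in particular $a(t|\check\epsilon, \check\nu) \LS a(t|\epsilon, \nu)$ a.e., which is exactly the conclusion of Theorem~\ref{T:SELECTION}.

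The substantive obstacle in this plan is not the Pareto-to-scalar reduction, which is essentially elementary once the strict positivity of the weight is exploited, but rather the topological preparation: sequential weak* compactness of $S$, stability of the decomposition \eqref{E:ACCI} under weak* limits, and lower semicontinuity of the trace-type integrand on the right-hand side of \eqref{E:INTRACE}. These are the analytical ingredients I expect to be supplied by Section~\ref{S:MA}; granted them, the existence of a minimal element follows by the short argument above.
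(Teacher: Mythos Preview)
Your scalarization argument is correct and genuinely different from the paper's route. The paper does not minimize a functional; it applies an abstract order-theoretic result (Theorem~\ref{T:WARD}) stating that a non-empty compact quasi-ordered space with closed predecessor sets has a minimal element, and Section~\ref{S:MA} supplies precisely those two hypotheses: sequential compactness of $S$ under \eqref{E:YOUNC}, and closedness of each $P(\epsilon,\nu)$. Your approach reuses the compactness, replaces the predecessor-set argument by continuity of $F$, and then extracts $\PREC$-minimality from the minimizer of $F$ via your elementary final step, which is sound. One correction: your justification of lower semicontinuity via ``pointwise-in-$t$ l.s.c.\ plus Fatou'' is not the right mechanism, since weak* convergence in $\BBE^*$ carries no pointwise-in-$t$ information. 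What the paper actually shows (last lemma of Section~\ref{S:MA}) is that $a(\cdot|\epsilon^k,\nu^k)\to a(\cdot|\tilde\epsilon,\tilde\nu)$ weak* in $\L^\infty([0,\infty))$, because $a$ is \emph{linear} in the Young measures; testing against $e^{-t}\in\L^1([0,\infty))$ then gives continuity of $F$ directly, which is stronger than what you need. Your route has the pleasant bonus that the minimal element produced also minimizes the scalar $F$ over all of $S$, whereas the paper's argument, going through the Hausdorff maximal principle, only locates the minimal element within a maximal totally ordered chain (cf.\ the Remark following Theorem~\ref{T:SELECTION}) and attaches no global variational characterization to it.
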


The proof of Theorem~\ref{T:SELECTION} will be given in Section~\ref{S:MA}.

\begin{remark}
As the proof of Theorem~\ref{T:SELECTION} shows, the minimal elements are
extracted from subsets of $S$ that are \emph{totally ordered} by the quasi-order
$\PREC$ and maximal with respect to inclusion. Such subsets do exist because of
the Hausdorff maximality principle. It would be interesting to know whether
there are dissipative solutions that minimize the acceleration \emph{over the
whole set $S$}, not only over maximal totally ordered subsets. This is an open
problem (and may well be false).
\end{remark}

\begin{remark}
Because of \eqref{E:ACCI}, minimizing the acceleration amounts to making the
defect measures $\DSC$ and $\DDP$ as small as possible. Heuristically, one might
expect that the mininization of acceleration can counteract the occurence of
highly oscillatory velocity fields, which can be the source of anomalous
dissipation. By minimizing the defect measures one brings the dissipative
solution as close to being a weak solution as possible; see
\cite{FeireislHofmanova2020} for a related discussion. It would be interesting
to check whether one can reach $\DSC = 0$ and $\DDP = 0$ in subsets of
$\R^{d+1}$ so that the dissipative solution becomes a weak solution of
\eqref{E:IEE} there.
This will be considered elsewhere.

To make a connection between our approach and the construction of infinitely
many weak solutions of the isentropic Euler equation \eqref{E:IEE} using convex
integration, we sketch the argument in \cite{Markfelder2020}. The starting point
is a triple $(\RHO, \BM, \FLUX_0)$ with
\begin{equation}
	\partial_t \RHO + \nabla \cdot \BM = 0,
	\quad
	\partial_t \BM + \nabla \cdot \FLUX_0 = 0 
\label{E:SUBSOL}
\end{equation}
in some space-time domain $Q$, where $(\RHO, \BM)$ are smooth density/momentum
and $\FLUX_0$ is a smooth field taking values in the \emph{trace-free},
symmetric matrices. It is assumed that the following inequality holds pointwise
everywhere:
\begin{equation}
	d \lambda_\text{max} \Bigg( \bigg( \frac{\BM\otimes\BM}{\RHO} 
		+ P(\RHO)\ONE \bigg) - \FLUX_0 \Bigg) < c,
\label{E:LMAX}
\end{equation}
for some positive constant $c$. Here $\lambda_\text{max}(A)$ is the largest
eigenvalue of the symmetric $(d\times d)$-matrix $A$. The triple $(\RHO, \BM,
\FLUX_0)$ is called a \emph{subsolution}. 

Since $c$ is constant we can replace $\FLUX_0$ in \eqref{E:SUBSOL} by the
momentum flux
\begin{equation}
	\FLUX := \frac{c}{d} \ONE + \FLUX_0,
\label{E:UPLUS}
\end{equation}
and then \eqref{E:LMAX} is similar to the condition $\BM\otimes\BM/\RHO +
P(\RHO) \ONE \LS \FLUX$ in the sense of symmetric matrices; see \eqref{E:ACCI}.
In order to construct a weak solution of \eqref{E:IEE} the goal is now to
iteratively modify $(\RHO, \BM, \FLUX_0)$, while preserving \eqref{E:SUBSOL}, so
that equality is achieved a.e.\ in \eqref{E:LMAX} in the limit. This can be done
by the repeated superposition of oscillatory waves and a Baire category
argument, as pioneered in \cite{DeLellisSzekelyhidi2009,
DeLellisSzekelyhidi2010}.

Interestingly, it is sufficient to consider the trace
\begin{align}
	\Big( |\BM|^2/\RHO + d P(\RHO) \Big)
		& = \TRACE\bigg( \frac{\BM\otimes\BM}{\RHO} + P(\RHO) \ONE - \FLUX_0 \bigg)
\nonumber\\
		& \LS d \lambda_\text{max} \Bigg( \bigg( \frac{\BM\otimes\BM}{\RHO} 
		+ P(\RHO)\ONE \bigg) - \FLUX_0 \Bigg),
\label{E:LMAXTRACE}
\end{align}
which is smaller than $c$ everywhere because of \eqref{E:LMAX}. Indeed the
iteration in \cite{Markfelder2020} is set up to achieve equality in the limit
for the upper bound
\begin{equation}
	\int_Q \Big( |\BM|^2/\RHO + d P(\RHO) \Big) \LS c |Q|,
\label{E:LEFR}
\end{equation}
with $|Q|$ the Lebesgue measure of $Q$. The integral is a lower bound of the
acceleration. But instead of making \eqref{E:INTRACE} small, the goal in
\cite{Markfelder2020} is to make the left-hand side of \eqref{E:LEFR} bigger. If
$|\BM|^2/\RHO + d P(\RHO) = c$ a.e., then equality holds in \eqref{E:LMAXTRACE}
and
\[
	\frac{\BM\otimes\BM}{\RHO} + P(\RHO) \ONE - \FLUX_0 
		= \frac{1}{d} \TRACE\bigg( \frac{\BM\otimes\BM}{\RHO} + P(\RHO) \ONE 
			- \FLUX_0 \bigg) \ONE 
		= \frac{c}{d} \ONE
	\quad\text{a.e.;}
\]
see Lemma~0.2.1 in \cite{Markfelder2020}. Therefore $\FLUX = \BM\otimes\BM/\RHO
+ P(\RHO) \ONE$ a.e.\ (recall \eqref{E:UPLUS}), which we can insert into
\eqref{E:SUBSOL} in place of $\FLUX_0$ to obtain a weak solution of
\eqref{E:IEE}.

\end{remark}

\begin{remark}
We do not know whether acceleration minimizing, dissipative solutions are
unique. In order to overcome the non-uniqueness issue for hyperbolic
conservation laws, it has been proposed in \cite{Dafermos1973} to look for
solutions that decrease the total entropy (here: the total energy) as quickly as
possible. The right-hand side of \eqref{E:INTRACE}: 
\begin{align*}
	& \int_{\DR^d} \Big(
		\TRACE\big( \lbrack \BM\otimes\BM/\RHO \rbrack(t,dx) \big)
			+ d \llbracket P(\RHO) \rrbracket(t,dx) \Big)
\\
	&\qquad
		= 	\int_{\DR^d} \Big( 2 \big\lbrack \HA|\BM|^2/\RHO \big\rbrack(t,dx)
				+ d (\gamma-1) \llbracket U(\RHO) \rrbracket(t,dx) \Big)
\end{align*}
is a total energy, up to numerical factors. In this sense, our selection
procedure searches for dissipative solutions with minimal energy. Kinetic and
internal energies are somewhat independent since they involve different Young
measures.

Alternatively, we can apply the selection procedure above directly to
\begin{equation}
	f(t|\epsilon, \nu) := \int_{\DR^d} \llbracket \HA|\BM|^2/\RHO + U(\RHO)
		\rrbracket(t,dx)
\label{E:FENEGR}
\end{equation}
and use $f$ instead of $a$ to define the quasi-order $\PREC$. The integrand in
\eqref{E:FENEGR} is bounded below by $\HA|\BM|^2/\RHO + U(\RHO)$, with
density/momentum $(\RHO, \BM)$; see
Definition~\ref{D:DISSI}~\eqref{L:DECOMPOSITION}. We obtain a dissipative
solution that minimizes the total energy at a.e.\ $t\in[0,\infty)$ in the sense
of Theorem~\ref{T:SELECTION}, i.e., along maximal totally ordered subsets. Again
it would be interesting to find energy minimizers of the whole set $S$.

For a related discussion, we refer the reader to
\cite{BreitFeireislHofmanova2020b}. In that paper, the authors consider a
countable family of objective functionals, defined as the Laplace transforms of
the energy profiles $t \mapsto f(t| \epsilon, \nu)$ for a suitable collection of
Laplace parameters, and then iteratively minimize these functionals, as
explained above in the context of multi-objective optimization. They get an
energy minimizing dissipative solution in that sense. Analogously, instead of
constructing minimal elements using a quasi-order, as we are doing here, we
could apply the iterative minimization of \cite{BreitFeireislHofmanova2020b} to
find acceleration minimizing dissipative solutions in the sense outlined above.

The main focus of \cite{BreitFeireislHofmanova2020b}, however, is not on energy
minimization but the construction of a semigroup, i.e., of a solution operator
that to any admissible initial data selects precisely one dissipative solution
such that the following holds: If one follows a dissipative solution up to some
time $t$ and then takes the density/momentum at that time as new initial data,
then the solution operator will select for this new initial data precisely the
original dissipative solution shifted back in time by $t$. The issue here is the
restart-ability of dissipative solutions. This may be relevant to the
construction of dissipative solutions with minimal acceleration profiles in the
sense of the quasi-order in Definition~\ref{D:QO} and will be considered
elsewhere.
\end{remark}


\IGNORE{ 

\section{Laplace Transform}

The following theorem enables us to characterize the essential non-negativity of
a measurable function in terms of the sign of \emph{countably many integrals}.
The result is a consequence of the fact that measurability implies some amount
of regularity, as expressed by the Lebesgue differentiation theorem.

\begin{theorem}
\label{T:LAPLACE}
For any $f\in\L^\infty([0,\infty); X)$, with $X$ some Banach space, let 
\begin{equation}
	r_k(\lambda) := \int_0^\infty s^k \exp(-\lambda s) f(s) \,ds
	\quad\text{for $\lambda>0, k\in \N_0$.}
\label{E:RESK}
\end{equation}
Suppose that $f$ has left/right approximate limits at $t>0$, which means that
\begin{equation}
	\lim_{h\to 0+} \left\{ \frac{1}{h} \int_{t-h}^t \|f(s)-f(t-)\|_X \,ds 
		+ \frac{1}{h} \int_t^{t+h} \|f(s)-f(t+)\|_X \,ds \right\} = 0
\label{E:LEBE}
\end{equation}
for suitable $f(t-), f(t+) \in X$. Then
\begin{equation}
	\lim_{k\to\infty} \frac{1}{k!} \bigg( \frac{k}{t} \bigg)^{k+1} 
		r_k\left( \frac{k}{t} \right)
			= \frac{f(t-)+f(t+)}{2}.	
\label{E:PWINV}
\end{equation}

In particular, for $X = \R$ the following equivalence holds:
\begin{equation}
	\text{$f(t) \GS 0$ for a.e.\ $t\GS 0$}
	\quad\Longleftrightarrow\quad
	\text{$r_k(\lambda) \GS 0$ for all $\lambda\in\Q_+, k\in\N_0$.}
\label{E:EQUI}
\end{equation}
\end{theorem}

\begin{proof}
The proof of equivalence \eqref{E:EQUI} is straightforward: If $f$ is
non-negative a.e., then the integrals in \eqref{E:RESK} are non-negative for
every $\lambda>0, k\in\N_0$. For the converse direction, notice first that the
map $\lambda \mapsto r_k(\lambda)$ is continuous. Therefore, if $r_k(\lambda)
\GS 0$ for all $\lambda\in\Q_+, k\in\N_0$, then the same is true for all
$\lambda>0$. For measurable functions the right and left approximate limits
exist and coincide for a.e.\ $t>0$, by Lebesgue differentiation theorem. Then
$f(t) \GS 0$ for all such $t$, because of \eqref{E:PWINV}.

A variant of the inversion formula \eqref{E:PWINV} can be found in Theorem~8.2.1
in \cite{Doetsch1950}. We include a proof for the reader's convenience. We
proceed in three steps.

\medskip

\textbf{Step~1.} Note first that \eqref{E:RESK} converges absolutely in the
Banach space $X$ for all $\lambda>0$ because $f$ is bounded. A change of
variables with $\lambda = k/t$ shows that
\begin{align*}
	& \frac{1}{k!} \bigg( \frac{k}{t} \bigg)^{k+1} \int_0^{\alpha t}
		s^k \exp\bigg( -\frac{k}{t} s \bigg) \,ds 	
\\
	& \qquad
		= \frac{1}{k!} \int_0^{\alpha k} u^k \exp(-u) \,du 
		= 1 - \exp(-\alpha k) \sum_{l=0}^k \frac{(\alpha k)^l}{l!}
\end{align*}
for all $t>0, \alpha\GS 0$; see 6.5.1/11/13 of \cite{AbramowitzStegun1964}.
Letting $\alpha\to\infty$ or $\alpha=1$ we obtain
\[
	\lim_{k\to\infty} \frac{1}{k!} \bigg( \frac{k}{t} \bigg)^{k+1} 
		\int_0^L s^k \exp\bigg( -\frac{k}{t} s \bigg) \,ds = \begin{cases}
			1 & \text{if $L = \infty$,}
\\
			1/2 & \text{if $L = t$;}
		\end{cases}
\]
see 6.5.34 of \cite{AbramowitzStegun1964}. Therefore \eqref{E:PWINV} follows if
we can show that
\begin{align}
	\lim_{k\to\infty} \frac{1}{k!} \bigg( \frac{k}{t} \bigg)^{k+1} 
		\int_0^t s^k \exp\bigg( -\frac{k}{t} s \bigg) 
			\big( f(s)-f(t-) \big) \,ds & = 0,
\label{E:RES1}\\
	\lim_{k\to\infty} \frac{1}{k!} \bigg( \frac{k}{t} \bigg)^{k+1} 
		\int_t^\infty s^k \exp\bigg( -\frac{k}{t} s \bigg) 
			\big( f(s)-f(t+) \big) \,ds & = 0
\label{E:RES2}
\end{align}
for all $t>0$. The idea is that the function $s \mapsto s^k \exp(-ks/t)$ has a
sharply peaked maximum at $s=t$, away from which it decays quickly to zero, as
$k\to\infty$.

\medskip

\textbf{Step~2.} Let $\Psi(s) := \int_t^s (f(s)-f(t-)) \,dt$ for $s\LS t$.
Integrating by parts, we get
\begin{align}
	& \int_0^t s^k \exp\bigg( -\frac{k}{t} s \bigg) 
		\big( f(s)-f(t-) \big) \,ds
\label{E:INTP}\\
	& \qquad
		= s^k \exp\bigg( -\frac{k}{t} s \bigg) \Psi(s) \bigg|_0^t 
		- \int_0^t \exp\bigg( -\frac{k}{t} s \bigg)
			\bigg( -\frac{k}{t} s^k + ks^{k-1} \bigg) \Psi(s) \,ds.
\nonumber
\end{align}
The first term on the right-hand side of \eqref{E:INTP} vanishes if $k\GS 1$
because $\Psi(t) = 0$. Changing variables to $u := s/t$, we must therefore show
that
\[
	\lim_{k\to\infty} \frac{1}{t} \frac{k^{k+2}}{k!}
		\int_0^1 \exp(-ku) u^{k-1} (u-1) \Psi(tu) \,du = 0;
\]
see \eqref{E:RES1}. Let $t>0$ be fixed. Since $f$ is bounded, we have
\[
	\| \Psi(tu) \|_X \LS C t |u-1|
	\quad\text{for all $u>0$.}
\]
The constant $C$ depends only on the $\L^\infty([0,\infty);X)$-norm of $f$.

Suppose that the left approximate limit of $f$ exists at $t$, so that
\eqref{E:LEBE} holds. For any $\EPS>0$, there exists a $\delta \in (0,t)$ with
the property that
\begin{align*}
	\| \Psi(tu) \|_X
		& \LS \int_{tu}^t \|f(s)-f(t-)\|_X \,ds
\\
		& < \frac{\EPS}{4} \, t |u-1|
	\quad\text{for all $u<1$ with $t |u-1| < \delta$.}
\end{align*}
Let $\eta := \delta/t$ and notice that $(u-1)^2 \LS 1$ for $u\in[0,1]$. Then it
follows that
\begin{align}
	& \frac{1}{t} \frac{k^{k+2}}{k!} \int_0^1
		\exp(-ku) u^{k-1} |u-1| \| \Psi(tu) \|_X \,du 
\label{E:EST1}\\
	& \qquad
		\LS \frac{\EPS}{4} \, \frac{k^{k+2}}{k!} 
			\int_{1-\eta}^1 \exp(-ku) u^{k-1} (u^2-2u+1) \,du
\nonumber\\
	& \qquad\quad
		+ C \, \frac{k^{k+2}}{k!} \int_0^{1-\eta} \exp(-ku) u^{k-1} \,du.
\nonumber
\end{align}
The first term on the right-hand side of \eqref{E:EST1} can be estimated
using
\[
	\int_0^\infty \exp(-ku) u^{k-1} (u^2-2u+1) \,du 
		= \frac{(k+1)!}{k^{k+2}} - 2\frac{k!}{k^{k+1}} + \frac{(k-1)!}{k^k}
		= \frac{k!}{k^{k+2}}
\]
(see 6.1.1 in \cite{AbramowitzStegun1964}) and is therefore not bigger than
$\EPS/4$. For the second term we observe that the function $u \mapsto \exp(-ku)
u^{k-1}$ is increasing on the interval $[0,1-1/k]$. For $k$ sufficiently large,
we have that $1-1/k \GS 1-\eta$. Then 
\begin{align*}
	\int_0^{1-\eta} \exp(-ku) u^{k-1} \,du
		& \LS \exp\big( -k(1-\eta) \big) (1-\eta)^{k-1} \int_0^{1-\eta} du
\\
		& = \exp(-k) \, \big( \exp(\eta) (1-\eta) \big)^k.
\end{align*}
The function $s\mapsto \exp(s)(1-s)$ is increasing on the interval $[0,1]$ and
strictly less than $1$ if $s<1$. Since $\delta<t$ it follows that $\exp(\eta)
(1-\eta) < 1$, hence
\[
	\big( \exp(\eta) (1-\eta) \big)^k \LS \frac{1}{k^2}
	\quad\text{for all $k$ large enough.}
\]
We then use Stirling's formule to conclude that
\[
	\frac{k^k}{k!} \exp(-k) \sim \frac{1}{\sqrt{2\pi k}}
		\longrightarrow 0
	\quad\text{as $k\to\infty$.}
\]
Since $\EPS>0$ was arbitrary, we have proved \eqref{E:RES1}.

\medskip

\textbf{Step~3.} Similarly, in order to prove \eqref{E:RES2}, we define $\Psi(s)
:= \int_t^s (f(s)-f(t+)) \,dt$ for $s\GS t$. If the right approximate limit of
$f$ exists at $t$ (see \eqref{E:LEBE}), then
\[
	\| \Psi(tu) \|_X < \frac{\EPS}{4} \, t |u-1|
	\quad\text{for all $u>1$ with $t |u-1| < \delta$,}
\]
for suitable $\delta>0$. Letting $\eta := \delta/t$, we obtain the inequality
\begin{align}
	& \frac{1}{t} \frac{k^{k+2}}{k!} \int_1^\infty
		\exp(-ku) u^{k-1} |u-1| \| \Psi(tu) \|_X \,du 
\label{E:EST2}\\
	& \qquad
		\LS \frac{\EPS}{4} \, \frac{k^{k+2}}{k!} 
			\int_1^{1+\eta} \exp(-ku) u^{k-1} (u^2-2u+1) \,du
\nonumber\\
	& \qquad\quad
		+ C \, \frac{k^{k+2}}{k!} \int_{1+\eta}^\infty \exp(-ku) u^{k-1} 
			(u-1)^2\,du.
\nonumber
\end{align}
The first term on the right-hand side of \eqref{E:EST2} can be bounded by
$\EPS/4$ as above. For the second term we first pick some $k_0 \in \N$ and
rewrite
\[
	\exp(-ku) u^{k-1} = \exp(-hu) u^h \, \exp(-k_0 u) u^{k_0-1}
\]
for all $k\in\N$, with $h := k-k_0$. The function $u \mapsto \exp(-hu) u^h$ is
strictly decreasing on the interval $[1,\infty)$ if $h > 0$, so we can estimate 
\begin{align*}
	& \int_{1+\eta}^\infty \exp(-ku) u^{k-1} (u-1)^2 \,du
\\
	& \qquad
		\LS \exp\big( -h(1+\eta) \big) (1+\eta)^h
			\int_0^\infty \exp(-k_0 u) u^{k_0-1} (u^2+1) \,du.
\end{align*}
The integral on the right-hand side is finite and independent of $k, h$, and
\[
	\exp\big( -h(1+\eta) \big) (1+\eta)^h
		= \exp(-h) \,\big( \exp(-\eta) (1+\eta) \big)^h.
\]
The function $s \mapsto \exp(-s)(1+s)$ is decreasing on the interval
$[0,\infty)$ and strictly smaller than $1$ if $s>0$. It follows that
$\exp(-\eta)(1+\eta)<1$, hence
\[
	\big( \exp(-\eta) (1+\eta) \big)^h \LS \frac{1}{k^2}
	\quad\text{for all $k$ (hence $h$) large enough.}
\]
We then argue as above, using Stirling's formula again, to prove \eqref{E:RES2}.
\end{proof}

\begin{remark}
The integral in \eqref{E:RESK} equals the $k$th derivative of the Laplace
transform of $f$, up to normalizing factors. It is possible to modify the
inversion formula \eqref{E:PWINV} so it only requires the values of
$r_k(\lambda)$ for $\lambda\in\N_0$; see Section~8.2 in \cite{Doetsch1950}.
\end{remark}

} 


\section{Quasi-Ordered Spaces}
\label{S:QOS}

A \emph{quasi-order} on a set $X$ is a binary relation $R$ with the properties
\begin{enumerate}[label=(\roman*)]
\item Reflexivity: for all $x \in X$ we have $(x,x) \in R$.
\item Transitivity: for all $x,y, z \in X$ we have
\[
	\text{$(x,y) \in R$ and $(y,z) \in R$}
	\quad\Longrightarrow\quad
	(x,z) \in R.
\]
\end{enumerate}
Note that we do not assume
\begin{enumerate}[label=(\roman*), resume]
\item Antisymmetry: for all $x,y \in X$ we have
\[
	\text{$(x,y) \in R$ and $(y,x) \in R$}
	\quad\Longrightarrow\quad
	x=y,
\]
\end{enumerate}
which would make the quasi-order $R$ into a \emph{partial order}. To simplify
the notation, we will usually write $x \PREC y$ instead of $(x,y) \in R$, with
$x,y \in X$. 

For any $x \in X$ we define the set of predecessors as
\[
	P(x) := \{ y \in X \colon y \PREC x \}.
\]
%

\begin{theorem}\label{T:WARD}
Suppose that $X$ is a non-empty compact set with a quasi-order $R$ such that
$P(x)$ is closed for every $x\in X$. Then $X$ has a minimal element, i.e., an
element $m \in X$ such that, if $y \in X$ and $m$ can be compared at all, then $m
\PREC y$.
\end{theorem}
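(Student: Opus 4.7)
The plan is to combine the Hausdorff maximality principle with a finite intersection property argument powered by the compactness of $X$. Since $X$ is non-empty, the Hausdorff maximality principle supplies a non-empty subset $C \subseteq X$ that is totally ordered with respect to $\PREC$ and maximal among such subsets under inclusion. This is the natural analogue in the quasi-ordered setting of Zorn-type chain arguments, and the closedness of the predecessor sets $P(x)$ is exactly the ingredient that couples well with compactness.

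Next I show that the family of closed sets $\{P(x) : x \in C\}$ enjoys the finite intersection property. Given $x_1, \ldots, x_n \in C$, total orderedness of $C$ lets me pick an index $i$ with $x_i \PREC x_j$ for every $j$, and reflexivity then places $x_i$ in $\bigcap_{j=1}^{n} P(x_j)$. Combining this with compactness of $X$ yields a point $m \in \bigcap_{x \in C} P(x)$; by construction $m \PREC x$ for every $x \in C$.

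It remains to verify that this $m$ is minimal in the sense required by the statement. Let $y \in X$ be comparable to $m$. If $m \PREC y$ there is nothing to prove, so suppose instead $y \PREC m$. Transitivity together with $m \PREC x$ for every $x \in C$ forces $y \PREC x$ for all $x \in C$, so $C \cup \{y\}$ is still totally ordered by $\PREC$. Maximality of $C$ therefore forces $y \in C$, whence $m \PREC y$ as well. In either case $m \PREC y$, which is the desired minimality.

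The only mildly delicate point is this last case analysis, where maximality of $C$ is used to rule out the existence of an element strictly below $m$; the finite intersection argument and the extraction of $m$ via compactness are otherwise routine once the maximal totally ordered subset has been selected.
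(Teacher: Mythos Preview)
Your proof is correct and follows essentially the same route as the paper: Hausdorff maximality to get a maximal chain, compactness to extract a common predecessor $m$, and then maximality of the chain to verify minimality of $m$. The only cosmetic differences are that the paper runs the maximality argument on the family $\{P(x):x\in X\}$ ordered by inclusion rather than on $X$ ordered by $\PREC$ (these are equivalent since $x_1\PREC x_2 \iff P(x_1)\subseteq P(x_2)$), and phrases the compactness step via open covers rather than the finite intersection property.
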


\begin{proof}
This was proved in \cite{Wallace1945}. We include the argument for the reader's
convenience. We first observe that the set $A := \{ P(x) \colon x\in X \}$ is
partially ordered by inclusion. By the Hausdorff maximal principle (which is
equivalent to the axiom of choice), there exists a \emph{maximal totally ordered
subset} $T \subset A$, i.e., it holds
\begin{enumerate}
\item For all $P(x_1), P(x_2) \in T$ we have $P(x_1) \subseteq P(x_2)$ or
$P(x_2) \subseteq P(x_1)$.
\item $T$ is not properly contained in another totally ordered set.
\end{enumerate}
Then the intersection $\bigcap_{P(x) \in T} P(x)$ is non-empty. Indeed if this
were false, then
\[
	X = X \setminus \bigcap_{P(x) \in T} P(x) 
		= \bigcup_{P(x) \in T} \big( X \setminus P(x) \big),
\]
by De Morgan's laws. Since all $P(x)$ are closed, their complements are open,
and so 
\[
	\text{$\{X\setminus P(x)\}_{P(x)\in T}$ is an open covering of $X$.}
\]
Since $X$ is compact, there exists a finite subcovering, i.e., there exist
finitely many $x_k \in X$, $k=1\ldots K$, for which $P(x_k) \in T$ and the
following holds:
\[
	X = \bigcup_{k=1}^K \big( X \setminus P(x_k) \big)
		= X \setminus \bigcap_{k=1}^K P(x_k).
\]
Thus $\bigcap_{k=1}^K P(x_k)$ is empty. But every \emph{finite} subset of a
non-empty totally ordered set has a lower bound; this can be proved using
induction on the cardinality of the subset. This means there exists an index $l
\in \{1 \ldots K\}$ such that
\[
	P(x_l) \subseteq P(x_k)
	\quad\text{for all $k=1\ldots K$,}
\]
and thus $P(x_l) = \bigcap_{k=1}^K P(x_k)$. It follows that $P(x_l)$ must be
empty. But this is a contradiction because $x_l \in P(x_l)$, by reflexivity of
the quasi-order.

Therefore there exists an element $m \in \bigcap_{P(x)\in T} P(x)$. 

We claim that $m$ is a minimal element of $X$. Since $m \in P(x)$ for all $P(x)
\in T$, it follows that $m \preccurlyeq x$ for such $x$. Consider therefore $x
\in X$ with $P(x) \not\in T$. Suppose that $x \preccurlyeq m$. Since $P(x)
\not\in T$ there must exist $P(y) \in T$ to which $P(x)$ cannot be compared,
i.e., it holds neither $P(y) \subseteq P(x)$ nor $P(x) \subseteq P(y)$. Indeed
if $P(x)$ could be compared to every $P(y) \in T$, then $P(x) \in T$, by
maximality of $T$. But $P(y) \in T$ implies that $m \preccurlyeq y$, as we have
just shown. By transitivity, our assumption $x \PREC m$ then implies that $x
\preccurlyeq y$. Using transitivity again, we observe that for every $z \PREC x$
we also have $z \PREC y$, and thus $P(x) \subseteq P(y)$. But this is a
contradiction since $P(x)$ and $P(y)$ cannot be compared, by choice of $y$.
Therefore the case $x \preccurlyeq m$ cannot occur, which means that either $x$
and $m$ cannot be compared at all, or $m \PREC x$.
\end{proof}


\section{Dissipative Solutions}

In this section, we introduce the class of dissipative solutions of
\eqref{E:IEE} from which we will select the ones with minimal acceleration. Our
construction is more detailed than the one in \cite{FeireislGhoshalJana2020}
since it involves Young measures to describe the defect measures $\DSC, \DSP$ in
\eqref{E:MOM2}; see also \cite{FeireislHofmanova2020}. Let us first introduce
some notation. 

Let $\MAT{d}$ be the space of real $(d\times d)$-matrices and
\[
	\MAT[\square]{d} := \big\{ A \in \MAT{d} \colon 
		\text{$v\cdot (Av) \REL 0$ for all $v\in\R^d$} \big\}
\]
where $\REL$ stands for either $\GS$ or $>$. The analogous spaces of symmetric
matrices are denoted by $\SYM{d}$ and $\SYM[\REL]{d}$. We denote by $\SKEW{d}$
the space of real skew symmetric $(d\times d)$-matrices. The Frobenius inner
product is defined as
\[
	A : B := \TRACE(A^\T B)
	\quad\text{for $A, B \in \MAT{d}$.}
\]
Let $\|\cdot\|$ be the operator norm induced by the Euclidean norm $|\cdot|$ on
$\R^d$.
%
%

We denote by $\DR^d$ the one-point compactification of $\R^d$: We adjoin to
$\R^d$ a point $\infty$ and define, with $h(x) := 1/(1+|x|)$ for all $x\in\R^d$,
a distance
\[
    d(x,y) := \begin{cases}
        \min\{ |x-y|, h(x)+h(y) \}
            & \text{if $x,y \in \R^d$,}
\\
        h(x) 
            & \text{if $x\in\R^d$ and $y=\infty$,}
\\
        0
            & \text{if $x,y=\infty$;}
   \end{cases}
\]
see \cite{Mandelkern1989}. Then $|x|\rightarrow \infty$ is equivalent to
$d(x,\infty) \rightarrow 0$. 

We denote by $\C_*(\R^d; V)$ the space of continuous functions 
\[
	g \colon \R^d \longrightarrow V
	\quad\text{for which}\quad
	\text{$\lim_{|x|\rightarrow\infty} g(x) \in V$ exists,}
\]
equipped with the $\sup$-norm. Here $V$ is some Banach space. Then
\[
	\C_*(\R^d; V) = V + \C_0(\R^d; V),
\]
with $\C_0(\R^d; V)$ the closure of the space of compactly supported continuous
$V$-valued functions with respect to the the $\sup$-norm. Functions in
$\C_*(\R^d; V)$ can be identified with elements in $\C(\DR^d; V)$: To
$g\in\C_*(\R^d; V)$ we associate $\dot g \in \C(\dot\R^d; V)$ as
\[
    \dot g(x) := \begin{cases}
        g(x) 
            & \text{if $x\in\R^d$,}
\\
        \lim_{|x|\rightarrow\infty} g(x)
            & \text{if $x=\infty$.}
    \end{cases}
\]
For simplicity of notation, we will not distinguish between $g$ and $\dot{g}$.

We now define the space of test functions
\begin{equation}
	\AF := \big\{ u\in\C^1(\R^d;\R^D) \colon
    \text{$\nabla u \in \C_*\big( \R^d;\MAT{D\times d} \big)$} \big\},
\label{E:AF}
\end{equation}
with $\MAT{D\times d}$ the space of $(D\times d)$-matrices. We will not
explicitly indicate the dimension $D$ when it is clear from the context.
Functions in $\AF$ grow at most linearly at infinity. In particular, the space
$\AF$ contains all linear maps 
\[
	u(x):=Ax
	\quad\text{for all $x\in \R^d$, with $A\in\MAT{d}$,}
\]
and so test functions do not have to have compact support.

Let $\C^1_c([0,\infty)) \otimes \AF$ be the space of tensor products
\begin{equation}
	\eta\otimes\zeta(t,x) := \eta(t)\zeta(x)
	\quad\text{with $\eta \in \C^1_c\big( [0,\infty) \big)$ and 
		$\zeta \in \AF$.}
\label{E:TENS}
\end{equation}
We will assume that partial differential equations hold in duality with
$\C^1_c([0,\infty)) \otimes \AF$, which means testing against functions of the
form \eqref{E:TENS}; see Section~\ref{SS:GE} for details. For all $T>0$, the
tensor product $\C([0,T])\otimes\AF$ is dense in $\C([0,T]; \AF)$ with respect
to the $\sup$-norm because $\AF$ is a locally convex topological vector space. 


\subsection{A Priori Bounds}
\label{SS:APB}

We now collect various natural a priori-bounds for solutions of \eqref{E:IEE}.
To simplify notation, we will use the subscript $t$ to indicate the value at
time $t$, as in $\RHO_t := \RHO(t,\cdot)$. The continuity equation in
\eqref{E:IEE} implies that if $\bar{\RHO}$ has total mass one, then $\RHO_t \in
\SP(\R^d)$ for all $t\in [0,\infty)$. Similarly, the entropy condition
\eqref{E:ED} implies that the total energy should be non-increasing in time.
Assuming that
\begin{equation}
	\bar{E} := \int_{\R^d} \HA \bar{\RHO} |\bar{\BU}|^2 + U[\bar{\RHO}]
		< +\infty
\label{E:ENIN}
\end{equation}
we will therefore be interested in solutions of \eqref{E:IEE} with finite
energy, so that
\begin{equation}
	\int_{\R^d} \HA \RHO_t |\BU_t|^2 + U[\RHO_t] \LS \bar{E}
	\quad\text{for all $t \in [0,\infty)$.}
\label{E:ENRGY}
\end{equation}
As a consequence of the Cauchy-Schwarz inequality, the momentum $\BM_t := \RHO_t
\BU_t$ is an $\R^d$-valued Borel measure whose total variation is bounded by
$\bar{E}$, uniformly in $t$. Moreover, Definition~\ref{D:INT} and
\eqref{E:ENRGY} provide higher integrability for $\RHO_t$. We have
\[
	\RHO \in \L^\infty\big( [0,\infty); \L^\gamma(\R^d) \big),
	\quad
	\BM \in \L^\infty\big( [0,\infty); \L^\frac{2\gamma}{\gamma+1}(\R^d) \big),
\]
because of H\"{o}lder inequality. In particular, both $\RHO_t$ and $\BM_t$ are
absolutely continuous with respect to $\LEB^d$. We will always make assumption
\eqref{E:ENIN} in the following.

We will also require that the initial density $\bar{\RHO}$ has finite second
moment:
\begin{equation}
	\bar{M} := \bigg( \int_{\R^d} |x|^2 \,\bar{\RHO}(dx) \bigg)^{1/2}
		< +\infty.
\label{E:MOIN}
\end{equation}
Multiplying the continuity equation by $\HA|x|^2$ and integrating by parts, we
find
\[
	\frac{d}{dt} \bigg( \int_{\R^d} |x|^2 \,\RHO_t(dx) \bigg)^{1/2}
		\LS \bigg( \int_{\R^d} |\BU_t(x)|^2 \,\RHO_t(dx) \bigg)^{1/2}
\]
for all $t$, formally. We will therefore be interested in solutions of
\eqref{E:IEE}, for which
\begin{equation}
	\bigg( \int_{\R^d} |x|^2 \,\RHO_t(dx) \bigg)^{1/2}
		\LS \bar{M} + t (2\bar{E})^{1/2} =: M(t)
	\quad\text{for all $t\in [0,\infty)$.}
\label{E:MOMM}
\end{equation}
This in turn implies that the momentum $\BM_t$ has finite first moment for all
times, which follows again from Cauchy-Scharz inequality with \eqref{E:ENRGY}.

\begin{remark}
The products of $(\RHO_t, \BM_t)$ with $\zeta \in \AF$ are integrable in space
since these measures have finite first moments. Moreover, the spatial derivative
$\nabla\zeta$ is bounded, therefore the integrals involving fluxes of
\eqref{E:IEE} are well-defined as well.
\end{remark}


\subsection{Time Regularity}\label{SS:TR}

Because of the a priori-bounds from Section~\ref{SS:APB}, we think of solutions
of the isentropic Euler equations \eqref{E:IEE} as curves $t \mapsto (\RHO_t,
\BM_t)$ taking values in a convex set of vector measures whose total variations
are bounded uniformly in time. In order to quantify the time regularity of these
curves we must choose an appropriate metric structure on the spaces of densities
and momenta.

\begin{definition}[$p$-Wasserstein Distance]
For any $\RHO^1, \RHO^2 \in \SP(\R^D)$ let
\label{D:WAS}
\[ 
    \ADM(\RHO^1,\RHO^2) := \big\{ \GAMMA\in\SP(\R^{2D}) \colon
        \text{$\PP^k\#\GAMMA = \RHO^k$ with $k=1..2$} \big\}
\]
be the space of admissible transport plans connecting $\RHO^1$ and
$\RHO^2$, where
\[
    \PP^k(x^1, x^2) := x^k
    \quad\text{for all $(x^1,x^2) \in \R^{2D} = (\R^D)^2$}
\]
and $k=1..2$, and $\#$ denotes the push-forward of measures. For any
$1\LS p<\infty$ the $p$-Wasserstein distance $\WAS_p(\RHO^1,\RHO^2)$
between $\RHO^1$, $\RHO^2$ is defined by
\begin{equation}
    \WAS_p(\RHO^1,\RHO^2)^p := \inf_{\GAMMA \in \ADM(\RHO^1,\RHO^2)}
        \left\{ \int_{\R^{2D}} |x^1-x^2|^p \,\GAMMA(dx^1,dx^2) \right\}.
\label{E:WASDE}
\end{equation}
\end{definition}

The $p$-Wasserstein distance between the two measures $\RHO^1$ and $\RHO^2$ is
the minimal cost it takes to transport $\RHO^1$ into $\RHO^2$ if the cost of
moving a unit mass in $\R^D$ from $x$ to $y$ is defined as the $p$th power of
the Euclidean distance. The elements of $\ADM(\RHO^1, \RHO^2)$ are called
\emph{transport plans}; they all transport $\RHO^1$ to $\RHO^2$. The $\inf$ in
\eqref{E:WASDE} is attained for a suitable $\GAMMA \in \ADM(\RHO^1, \RHO^2)$,
called an \emph{optimal transport plan}. For $p=2$ the support of an optimal
transport plan is contained in the graph of a cyclically monotone map (i.e, in
the subdifferential of a proper, lower semicontinuous, convex function).

\begin{definition}
We denote by $\SP_2(\R^d)$ the space of Borel probability measures with finite
second moment, endowed with the $2$-Wasserstein distance; see
Definition~\ref{D:WAS}. For a function $t \mapsto \RHO_t \in \SP_2(\R^d)$,
$t\in[0,\infty)$, we denote by
\[
	\|\RHO\|_{\LIP([0,\infty); \SP_2(\R^d))} 
  		:= \sup_{\substack{t_1, t_2 \in [0,\infty) \\ t_1\neq t_2}} 
    		\frac{\WAS_2(\RHO_{t_1}, \RHO_{t_2})}{|t_2-t_1|}
\]
its Lipschitz seminorm, with $\WAS_2$ the Wasserstein distance; see
\eqref{E:WASDE}.
\end{definition}

For any $t\in [0,\infty)$, the momentum $\BM_t = \RHO_t \BU_t$ is an element of
the convex set
\begin{equation}
	\M_t := \bigg\{ \BM \in \M(\R^d; \R^d) \colon 
		\int_{\R^d} (1+|x|) \,|\BM(dx)|
			\LS \big( 1+M(t) \big) (2\bar{E})^{1/2} \bigg\};
\label{E:MT}
\end{equation}
see Section~\ref{SS:APB}. The measures in $\M_t$ have bounded total variation
and are \emph{uniformly tight}, which means that for all $\EPS>0$ there exists a
compact set $K \subset \R^d$ with
\[
	\int_{\R^d\setminus K} |\BM(dx)| < \EPS
	\quad\text{for all $\BM \in \M_t$.}
\]
Indeed since $\BM \in \M_t(\R^d)$ has finite first moment, we can estimate
\begin{equation}
	\int_{\R^d\setminus \bar{B}_R(0)} |\BM(dx)| 
		\LS R^{-1} \int_{\R^d} |x| \,|\BM(dx)|
	\quad\text{for all $R>0$.}
\label{E:HERE}
\end{equation}
The integral on the right-hand side of \eqref{E:HERE} is bounded by
\eqref{E:MT}. On $\M_t$ the topology of narrow convergence of measures, defined
in terms of testing against bounded continuous functions, coincides with the
topology induced by the bounded Lipschitz norm (also called Dudley or
Monge-Kantorovich norm), defined as follows:

\begin{definition}\label{D:BLNOM}
We denote by $\LIP(\R^d; \R^D)$ the vector space of Lipschitz continuous maps
$\zeta \colon \R^d \longrightarrow \R^D$. The Lipschitz constant of $\zeta \in
\LIP(\R^d; \R^D)$ is
\[
    \|\zeta\|_{\LIP(\R^d)} 
        := \sup_{x_1 \neq x_2} \frac{|\zeta(x_1)-\zeta(x_2)|}{|x_1-x_2|}.
\]
We denote by $\BL(\R^d;\R^D)$ the subspace of bounded functions in $\LIP(\R^d;
\R^D)$. It is a Banach space when equipped with the norm
\begin{equation}
	\|\zeta\|_{\BL(\R^d)} 
		:= \|\zeta\|_{\L^\infty(\R^d)} + \|\zeta\|_{\LIP(\R^d)}.
\label{E:BKNORM}
\end{equation}
Let $\BL_1(\R^d; \R^D)$ be the space of all $\zeta \in \BL(\R^d; \R^D)$ with
$\|\zeta\|_{\BL(\R^d)}\LS 1$.

We denote by $\MK(\R^d; \R^D)$ the space of finite $\R^D$-valued Borel measures
$\BM$ with finite first moment, equipped with the bounded Lipschitz norm
\begin{equation}
    \|\BM\|_{\MK(\R^d)} := \sup\left\{ \int_{\R^d} \zeta(x) \cdot \BM(dx) 
        \colon \zeta \in \BL_1(\R^d; \R^D) \right\}.
\label{E:KANT} 
\end{equation}
The bounded Lipschitz norm is bounded above by the total variation. The integral
in \eqref{E:KANT} is well-defined because $\BM$ has finite first moment, by
assumption.
\end{definition}

We refer the reader to \cites{ChitescuMikulescuNitaIoana2016,
HilleSzarekWormZiemlanska2021} for a related discussion.



\subsection{Young Measures}
\label{SS:YM}

We will use parameterized measures (Young measures) to describe solutions of
\eqref{E:IEE}. The state space for density/momentum $(\RHO, \BM)$ is
\[
	X \cup \{(0,0)\}, 
    \quad\text{where}\quad 
    X := \Big( (0,\infty) \times \R^d \Big).
\]
Our goal is to define a suitable compactification of the state space.
Equivalently, we must specify the set of continuous and bounded functions on $X$
that are needed to represent the non-linearities in the isentropic Euler
equations \eqref{E:IEE}. In slight abuse of notation, we will use the symbols
$(\RHO, \BM)$ for elements in $X$. Let
\[
	h(\RHO, \BM) 
    	:= \RHO + \bigg( \frac{|\BM|^2}{2\RHO} + U(\RHO) \bigg);
\]
see Definition~\ref{D:INT}. We then introduce the set
\[
  	\W(X) := \left\{ 
    	\begin{aligned}
    		& \Phi = \varphi + \Bigg( 
        		c_\RHO \cdot \begin{pmatrix} \RHO \\ \BM \end{pmatrix}
        		+ c_K : \frac{\BM\otimes\BM}{\RHO}
        		+ c_U P(\RHO) 
      				\Bigg) / h \colon
\\
      		& \hspace{4em}
				\varphi \in \C_0(X),
        		c_\RHO \in \R^{d+1},
        		c_K \in \SYM{d},
        		c_U \in \R
    \end{aligned} \right\}.
\]
One can check that the functions in $\W(X)$ are continuous and bounded.
Moreover, being a finite-dimensional augmentation of the vector space $\C_0(X)$,
which is known to be separable, the set $\W(X)$ is a complete and
\emph{separable} vector space with respect to uniform convergence. Then there
exists a compact, metrizable Hausdorff space $\BFX$  and an embedding $e \colon
X \longrightarrow \BFX$ such that $e(X)$ is dense in $\BFX$. We will call $\BFX$
a \emph{compactification} of $X$. If $\ALG$ denotes the smallest closed
subalgebra in $\CB(X)$ that contains $\W(X)$, then for all maps $\Phi \in \ALG$,
the composition $\Phi \circ e^{-1} \colon X \longrightarrow \R$ has a continuous
(hence bounded) extension to all of $\BFX$. For simplicity of notation, we will
identify $X$ with its image $e(X)$, and functions $\Phi \in \ALG$ with their
extensions in $\C(\BFX)$. We refer the reader to Sections~6.4/5 in
\cite{CavallettiSedjroWestdickenberg2019} for additional details.

With $\DR^d$ the one-point compactification of $\R^d$, we now define 
\begin{equation}
  	\BBE := \L^1\big( [0,\infty),\C(\DR^d\times\BFX) \big)
\label{E:BBE}
\end{equation}
as the space of  measurable maps $\PHI \colon [0,\infty) \longrightarrow
\C(\dot\R^d\times\BFX)$ with finite norm:
\[
  	\|\PHI\|_\BBE 
    	:= \int_0^\infty \|\PHI(t,\cdot)\|_{\C(\dot\R^d\times\BFX)} \,dt
    		< \infty.
\]
Here $\PHI$ is measurable if it is the pointwise limit of a sequence of simple
functions. As usual, we identify functions that differ only on a Lebesgue null
set.

Since $\BFX$ is compact and metrizable it is separable. One can then show that
$\BBE$ is a separable Banach space. Its topological dual is given by
\begin{equation}
  	\BBE^* := \L^\infty_w\big( [0,\infty), \M(\dot\R^d\times\BFX) \big),
\label{E:ESTAR}
\end{equation}
the space of functions $\nu \colon [0,\infty) \longrightarrow
\M(\dot\R^d\times\BFX)$ such that
\begin{gather}
  	\text{$t \mapsto \int_{\dot\R^d\times\BFX} \phi(x, \FX) \,\nu_t(dx, d\FX)$ 
    	measurable for all $\phi \in \C(\dot\R^d\times\BFX)$, and}
\nonumber\\
  	\|\nu\|_{\BBE^*} 
	    := \ESUP_{t\in[0,\infty)} \|\nu_t\|_{\M(\dot\R^d\times\BFX)} 
			< \infty.
\label{E:ESTARBD}
\end{gather}
We used the notation $\nu_t := \nu(t,\cdot)$ and $\FX := (\RHO, \BM) \in \BFX$.
Again we identify functions that coincide almost everywhere. The duality is
induced by the pairing
\[
  	\langle \nu,\PHI\rangle := \int_0^\infty \int_{\dot\R^d\times\BFX} 
		\PHI(t, x, \FX) \,\nu_t(dx, d\FX) \,dt
	\quad\text{for $\PHI \in \BBE$ and $\nu \in \BBE^*$.}
\]
Bounded closed balls in $\BBE^*$ endowed with the weak* topology are metrizable
and (sequentially) compact, by Banach-Alaoglu theorem. We write
\begin{equation}
  \lbrack f(\RHO,\BM) \rbrack_\nu(t,dx) 
    := \int_{\BFX} \frac{f(\FX)}{h(\FX)} \,\nu_t(dx, d\FX)
  \quad\text{for a.e.\ $t\in[0,\infty)$}
\label{E:YPAIR}
\end{equation}
and for all functions $f \colon X \longrightarrow \R$ with $f/h \in \ALG$. We
emphasize that the pairing \eqref{E:YPAIR}, being an integration of $f$ with
respect to $\nu$, is \emph{linear in $f$}. 

\begin{remark}\label{R:VAC}
The compactfication $\BFX$ adds points not only for the limit of \emph{large}
$(\RHO, \BM)$ but also for the case when $(\RHO,\BM)$ approaches \emph{vacuum}.
Indeed we have that
\begin{equation}
	\lim_ {\RHO\to 0} \frac{1}{h(\RHO,\BM)} \left.\begin{pmatrix}
		\RHO \\ \BM 
	\end{pmatrix}\right|_{\BM=\RHO\BU}
		= \frac{1}{1+|\BU|^2} \begin{pmatrix}
			1 \\ \BU 
		\end{pmatrix},
\label{E:VAC}
\end{equation}
for any $\BU \in \R^d$ fixed. Notice that the right-hand side of \eqref{E:VAC}
vanishes as $|\BU|\to\infty$. The points in $\BFX$ corresponding to vacuum
therefore have the topology of $\R^d$. This can also be seen by using
$(\RHO,\BM) \mapsto (\RHO,\BU := \BM/\RHO)$ to map the (non vacuum)
density-momentum space $X$ to the density-velocity space $Y := [0,\infty) \times
\R^d$, where vacuum is $\{0\}\times\R^d$. Let $V$ denote the set of vacuum
points of the compactification $\BFX$ and $X_V := ((0,\infty) \times \R^d) \cup
V$. The topology of $X_V$ is the one inherited from $\BFX$.
\end{remark}


\subsection{Gobal Existence}
\label{SS:GE}


We can now introduce dissipative solutions of \eqref{E:IEE}.

\begin{definition}[Dissipative Solutions] 
\label{D:DISSI}
Suppose that initial data
\begin{equation}
    \text{$\bar{\RHO} \in \SP_2(\R^d)$ with $\INT[\bar{\RHO}] < +\infty$,}
    \quad
    \bar{\BU} \in \L^2(\R^d,\bar{\RHO}),
	\quad
	\bar{\BM} := \bar{\RHO} \bar{\BU}
\label{E:DATA}
\end{equation}
is given. Let initial energy/moment $\bar{E}, \bar{M}$ be defined by
\eqref{E:ENIN} and \eqref{E:MOIN}.

A \emph{dissipative solution} of the isentropic Euler equations \eqref{E:IEE} is
a pair
\[
	\epsilon, \nu \in \L^\infty_w\big( [0,\infty);
		\M_+(\dot\R^d\times\BFX) \big)
\]
such that there exist 
\begin{equation}
\begin{aligned}
	\RHO & \in \L^\infty\big( [0,\infty); \L^\gamma(\R^d) \big),
\\
	\BM & \in \L^\infty\big( [0,\infty); \L^p(\R^d) \big),
	\quad p := \frac{2\gamma}{\gamma+1},
\end{aligned}
\label{E:SPACES}
\end{equation}
with the properties (1)--(9) listed below. We denote by $\llbracket \cdot
\rrbracket$ and $\lbrack \cdot \rbrack$ the pairing with the Young measures
$\epsilon$ and $\nu$, respectively; see \eqref{E:YPAIR} for the definition.

\medskip

\textbf{A priori bounds}
\begin{enumerate}
\item \label{L:BOUNDS}
The moment bound \eqref{E:MOMM} holds and $\BM_t \in \M_t$ for all $t \in
[0,\infty)$; see \eqref{E:MT}.
\item \label{L:EPSILON}
The map $t \mapsto E(t)$ is non-increasing and bounded by $\bar{E}$, with 
\begin{equation}
	E(t) := \int_{\DR^d} \big\llbracket \HA|\BM|^2/\RHO
		+ U(\RHO) \big\rrbracket(t,dx)
	\quad\text{for all $t\in [0,\infty)$.}
\label{E:TOTEN}
\end{equation}
\item \label{L:NU}
We have $N(t) \LS E(t)$ a.e., where
\begin{equation}
	N(t) := \int_{\DR^d} \big\lbrack \HA|\BM|^2/\RHO 
		+ U(\RHO) \big\rbrack(t,dx)
	\quad\text{for all $t\in [0,\infty)$.}
\label{E:NEN}
\end{equation}
\end{enumerate}

\medskip

\textbf{Time Regularity}
\begin{enumerate}[resume]
\item \label{L:LIPSCHITZ}
There exists a constant $L$ depending only on $d, \gamma$ such that 
\begin{align}
	\|\RHO\|_{\LIP([0,\infty);\SP_2(\R^d))} & \LS (2\bar{E})^{1/2},
\label{E:LIPWAS}\\[1ex]
	\|\BM\|_{\LIP([0,\infty); \MK(\R^d))} & \LS L\bar{E}.
\nonumber
\end{align}
\end{enumerate}

\medskip

\textbf{Eulerian Velocity}
\begin{enumerate}[resume]
\item \label{L:VELOCITY}
We have $\BM =: \RHO\BU$ with 
\[
    \BU_t \in \L^2(\R^d, \RHO_t)
    \quad\text{for all $t \in [0,\infty)$.}
\]
\end{enumerate}

\medskip

\textbf{Young measures}
\begin{enumerate}[resume]
\item \label{L:COMPATIBILITY}
The function $\RHO, \BM$ are compatible with the Young measures $\epsilon, \nu$:
\begin{equation}
\left.
\begin{gathered}
	\RHO_t(dx) 
		= \llbracket \RHO \rrbracket(t,dx) 
		= \lbrack \RHO \rbrack(t,dx)
\\[1ex]
	\BM_t(dx) 
		= \llbracket \BM \rrbracket(t,dx)
		= \lbrack \BM \rbrack(t,dx)
\end{gathered}
\right\}
\quad\text{for a.e.\ $t\in[0,\infty)$}.
\label{E:COMPAT}
\end{equation}
\item \label{L:DECOMPOSITION}
There exist functions
\begin{gather*}
    \DDC, \DSC \in \L_w^\infty\big( [0,\infty); \M(\dot{\R}^d; \SYM[\GS]{d}) \big),
\\[1ex]
    \DDP, \DSP \in \L_w^\infty\big( [0,\infty); \M_+(\dot{\R}^d) \big),
\end{gather*}
such that the following decomposition holds for a.e.\ $t\in[0,\infty)$:
\begin{equation}
\begin{aligned}
	\llbracket \BM\otimes\BM/\RHO \rrbracket(t,dx)
		& = r_t(x) \BU_t(x)\otimes\BU_t(x) \,dx + \DDC(t,dx),
\\[1ex]
	\llbracket P(\RHO) \rrbracket(t,dx)
		& = P\big( r_t(x) \big) \,dx + \DDP(t,dx),
\end{aligned}
\label{E:MOFL}
\end{equation}
where $\RHO_t =: r_t \LEB^d$. Formulas \eqref{E:MOFL} hold analogously with
$\lbrack \cdot \rbrack$ and $\DSC, \DSP$. 
\end{enumerate}

\medskip

\textbf{Hyperbolic conservation laws}
\begin{enumerate}[resume]
\item \label{L:DATA}
The initial data is attained:
\[
    \RHO(0,\cdot) = \bar{\RHO},
    \quad
    \BM(0,\cdot) = \bar{\BM}.
\]
\item \label{L:CONSLAWS}
The conservation laws are satisfied:
\begin{equation}
    \left.\begin{array}{r}
        \partial_t\RHO
            +\nabla\cdot\BM = 0 
\\[0.5em]
        \partial_t \BM 
			+ \nabla \cdot \lbrack \BM\otimes\BM/\RHO \rbrack 
        	+ \nabla \llbracket P(\RHO) \rrbracket = 0
    \end{array}\right\}
    \quad\text{in $\Big( \C^1_c\big( [0,\infty) \big)\otimes \AF \Big)^*$.}
\label{E:DISSI}
\end{equation}
\end{enumerate}
\end{definition}

\begin{remark}\label{R:ENEBO}
Because of Definition~\ref{D:DISSI}~\eqref{L:DECOMPOSITION}/\eqref{L:CONSLAWS},
the pair $(\RHO,\BM = \RHO\BU)$ satisfies the modified momentum equation
\eqref{E:MOM2}. We can rewrite the total energy as
\begin{align}
	& \int_{\DR^d} \llbracket \HA|\BM|^2/\RHO + U(\RHO) \rrbracket(t,dx)
\label{E:REPRE}\\
	& \qquad
		= \int_{\R^d} \Big( \HA r_t|\BU_t|^2 + U(r_t) \Big) \,dx 
			+ \int_{\DR^d} \bigg( \HA\TRACE\big( \DDC(t,dx) \big)
				+ \frac{1}{\gamma-1} \DDP(t,dx) \bigg),
\nonumber
\end{align}
where $\RHO_t = r_t\LEB^d$ and $t\in[0,\infty)$. Then
Definition~\ref{D:DISSI}~\eqref{L:EPSILON} implies the energy inequality
\eqref{E:EIG} with $\DDC$ in place of $\DSC$, from which \eqref{E:ENRGY}
follows. Equality \eqref{E:REPRE} also holds with $(\llbracket\cdot\rrbracket,
\DDC, \DDP)$ replaced by $(\lbrack\cdot\rbrack, \DSC, \DSP)$, because of
\eqref{E:COMPAT}. We find that
\[
	\int_{\DR^d} \bigg( \HA\TRACE\big( \DDC(t,dx) \big)
			+ \frac{1}{\gamma-1} \DDP(t,dx) \bigg)
		\GS \int_{\DR^d} \bigg( \HA\TRACE\big( \DSC(t,dx) \big)
			+ \frac{1}{\gamma-1} \DSP(t,dx) \bigg)
\]
for all $t\in[0,\infty)$, as a consequence of
Definition~\ref{D:DISSI}~\eqref{L:COMPATIBILITY}/\eqref{L:NU}.
\end{remark}

\begin{remark}
One can prove that the boundedness of $\RHO$ in $\L^\infty([0,\infty);
\L^\gamma(\R^d))$ with Lipschitz continuity in a weaker topology (here: with
respect to the Wasserstein distance, which metrizes the weak* convergence of
measures) implies that $t \mapsto \RHO_t$ is continuous in time with respect to
the \emph{weak} $\L^\gamma(\R^d)$-topology. Similarly, we have that $t \mapsto
\BM_t$ is continuous with respect to the weak $\L^p(\R^d)$-topology.
\end{remark}

\begin{proposition}\label{P:EXISTENCE}
For initial data as in \eqref{E:DATA}, dissipative solutions do exist.
\end{proposition}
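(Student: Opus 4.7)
The plan is to construct dissipative solutions via a variational time discretization of minimizing movement type, building on and refining the scheme of \cite{CavallettiSedjroWestdickenberg2019}. Fix a timestep $\tau>0$. Starting from $(\bar\RHO,\bar\BM)$, one generates iteratively discrete states $(\RHO_\tau^k,\BM_\tau^k)$ at $t_\tau^k:=k\tau$ by minimizing at each step the sum of a second-order work functional (which quantifies the deviation of mass trajectories from free transport, i.e.\ the acceleration) plus the internal energy $\INT$. The Euler--Lagrange equations of this variational problem yield a one-step discrete momentum balance and an energy inequality consistent with \eqref{E:ED}, while the moment and mass bounds come from controlling the transport in $\SP_2(\R^d)$.

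Next I would introduce two time interpolations: a piecewise constant interpolation $(\RHO_\tau,\BM_\tau)$ whose Young-measure limit will be $\epsilon$, and a piecewise linear interpolation obtained by pushing forward by convex combinations of the identity and the optimal discrete transport map (and of the velocities), whose limit generates $\nu$. The point of the second, finer interpolation, which is new with respect to \cite{CavallettiSedjroWestdickenberg2019}, is that one can bound the integral of $\HA|\BM_\tau|^2/\RHO_\tau+U(\RHO_\tau)$ in this interpolation by the discrete total energy and obtain $N_\tau(t)\leqslant E_\tau(t)$ at the discrete level. From the variational estimates one derives uniform bounds giving $\RHO_\tau$ Lipschitz in $\WAS_2$ (with constant controlled by $(2\bar E)^{1/2}$) and $\BM_\tau$ equicontinuous in the bounded Lipschitz norm $\MK(\R^d)$, using that $\partial_t\BM_\tau=-\nabla\cdot\FLUX_\tau$ with $\FLUX_\tau$ a.e.\ positive semidefinite so that \eqref{E:INTRACE} applies up to a $\tau$-error.

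Third, I would pass to the limit $\tau\to 0$ along a subsequence. Wasserstein compactness and an Ascoli argument produce a continuous curve $\RHO_t\in\SP_2(\R^d)$, bounded in $\L^\gamma$; a parallel argument for $\BM_\tau$ in $\MK(\R^d)$, together with the $\L^p$ bound, gives $\BM_t$. To handle the nonlinearities $\BM\otimes\BM/\RHO$ and $P(\RHO)$ I would use the construction of Section~\ref{SS:YM}: liftings of $(\RHO_\tau,\BM_\tau)$ to $\dot\R^d\times\BFX$ produce elements of $\BBE^\ast$, bounded by the energy, so by Banach--Alaoglu one extracts weak-$\ast$ limits $\nu$ (from the piecewise-linear lift) and $\epsilon$ (from the piecewise-constant lift). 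Testing against the linear generators of $\ALG$ gives the compatibility \eqref{E:COMPAT}, and passing to the limit in the discrete conservation laws (which is legitimate because test functions in $\C^1_c([0,\infty))\otimes\AF$ pair with $(\RHO_\tau,\BM_\tau)$ and with the Young-measure lifts of $\BM_\tau\otimes\BM_\tau/\RHO_\tau$ and $P(\RHO_\tau)$) yields \eqref{E:DISSI}. The decomposition \eqref{E:MOFL} with $\DDC,\DSC\geqslant 0$ and $\DDP,\DSP\geqslant 0$ follows from Jensen's inequality applied fiberwise on $\BFX$: the convex functions $(\RHO,\BM)\mapsto\BM\otimes\BM/\RHO$ and $\RHO\mapsto P(\RHO)$ satisfy the corresponding inequality between the pairing with $\nu_t,\epsilon_t$ and the values at the barycenters $(r_t,r_t\BU_t)$. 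Attainment of initial data follows from continuity in $\SP_2$ and $\MK$, and the Eulerian velocity $\BU_t$ exists by Lebesgue--Radon--Nikod\'ym with the energy bound ensuring $\BU_t\in\L^2(\R^d,\RHO_t)$.

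The main obstacle will be establishing the dichotomy between $\epsilon$ and $\nu$ encoded by \eqref{L:EPSILON}--\eqref{L:NU}, namely that the kinetic plus internal energy $E(t)$ measured with $\epsilon$ is non-increasing and bounded by $\bar E$, while the kinetic plus internal energy $N(t)$ measured with $\nu$ is only required to satisfy $N(t)\leqslant E(t)$ a.e. This pointwise (in $t$) energy inequality is what distinguishes the two interpolations: it has to be proved by combining the discrete energy bound, which is naturally attached to the endpoints (hence to $\epsilon$), with the tighter control provided by the piecewise linear interpolation (hence $\nu$), using lower semicontinuity of $\INT$ and of the quadratic Benamou--Brenier-type functional with respect to weak-$\ast$ convergence against the generators of $\ALG$. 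Once this is in place, Lipschitz continuity of $\BM$ in $\MK(\R^d)$ with constant $L\bar E$ follows from \eqref{E:INTRACE} and the boundedness of $E(t)$, concluding the verification of all items in Definition~\ref{D:DISSI}.
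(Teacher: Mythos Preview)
Your proposal is correct and follows essentially the same route as the paper: the variational time discretization of \cite{CavallettiSedjroWestdickenberg2019}, two time interpolations (piecewise constant for $\epsilon$, piecewise linear in the transport map and velocity for $\nu$), Young-measure compactness in $\BBE^*$ via Banach--Alaoglu, Helly's theorem for the monotonicity of $E$, and the convexity argument along the linear interpolation for $N(t)\LS E(t)$.

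One technical point you should be aware of for the decomposition \eqref{E:MOFL}: Jensen's inequality cannot be applied ``fiberwise on $\BFX$'' directly to $\nu_t$ or $\epsilon_t$, because these are not probability measures in the state variable (their fiber mass is $h(\RHO,\BM)$, and moreover the compactification $\BFX$ adds points at infinity where the integrands lose their convex structure). The paper introduces an auxiliary parametrized \emph{probability} measure $\upsilon_{t,x}\in\SP(X_V)$ on the vacuum-augmented state space $X_V$, obtained as the limit of the Dirac masses $\delta_{(r_{\tau,t}(x),m_{\tau,t}(x))}$ via the strict-tightness criterion of \cite{CastaingRaynaudDeFitteValadier2004}; Jensen is applied to $\upsilon_{t,x}$, and then $\upsilon$ is linked to $\nu$ through $\int_{X_V}\varphi\,d\upsilon_{t,x}\,dx=\int_{\BFX}(\varphi/h)\,d\nu_t$ for compactly supported $\varphi$, with a monotone-convergence argument to reach the quadratic and pressure integrands. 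This intermediate object is what actually produces the positive-semidefinite defects $\DSC,\DDC$ and nonnegative $\DSP,\DDP$.
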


\begin{proof}
We utilize the time variational time discretization in
\cite{CavallettiSedjroWestdickenberg2019}, with minor modifications that will be
pointed out below. The strategy is to generate an approximation of the solution
$(\RHO,\BM = \RHO\BU)$ of the isentropic Euler equations \eqref{E:IEE} at
discrete times $t_\tau^k := k\tau$, where $\tau>0$ and $k\in\N_0$, by solving a
convex minimization problem in each timestep. By interpolating in time and
passing to the limit $\tau\to 0$, we obtain a sequence of approximate solutions
that converge (up to a subsequence) towards a dissipative solution of
\eqref{E:IEE} in the sense of Definition~\ref{D:DISSI}. We will outline the main
steps below and refer the reader to \cite{CavallettiSedjroWestdickenberg2019}
for additional details.

\medskip

\textbf{Step~1.} Let us first describe the minimization problem, which is the
basis for the time discretization. It was shown in
\cite{CavallettiSedjroWestdickenberg2019} that for any given data
\[
	\RHO \in \SP_2(\R^d),
	\quad
	\BU \in \L^2(\R^d, \RHO)
\]
with finite energy, there exists a unique minimizer $X_\tau$ of the functional
\[
	\frac{3}{4\tau^2} \int_{\R^d} 
		  	\big|X(x)-\big( x+\tau\BU(x) \big) \big|^2 \,\RHO(dx)
		+ \int_{\R^d} U\big( r(x) \big) \det\big( \nabla X(x)^\S 
				\big)^{1-\gamma} \,dx,
\]
which is defined for $\R^d$-valued functions $X \in \L^2(\R^d, \RHO)$ that are
\emph{monotone}. Finiteness of the internal energy implies that $\RHO =: r
\LEB^d$, with $r$ some Lebesgue integrable function; recall
Definition~\ref{D:INT}. By slight abuse of notation, we will not always
distinguish between the measure $\RHO$ and its Lebesgue density $r$.
Monotonicity of $X_\tau$ implies $\BVS_\LOC$-regularity, and $\nabla X(x)^\S$ is
the symmetric part of the absolutely continuous part of the distributional
derivative of $X$, which is a locally finite $\MAT[\GS]{d}$-valued measure. The
optimality condition of this minimization problem takes the following form:
There exists $\LM_\tau \in \M\big( \R^d; \SYM[\GS]{d} \big)$ such that
\begin{equation}
	\int_{\R^d} \zeta(x) \cdot \frac{W_\tau(x)-\BU(x)}{\tau} r(x) \,dx
		= \int_{\R^d} \nabla\zeta(x) : 
			\Big( \PR_\tau(x) \,dx + \LM_\tau(dx) \Big)
\label{E:EULA}
\end{equation}
for all $\zeta \in \AF$ (see \eqref{E:AF}), with pressure field
\begin{equation}
	\PR_\tau(x) := P\big( r(x) \big)
			\det\big( \nabla X_\tau(x)^\S \big)^{1-\gamma}
				\; \big( \nabla X_\tau(x)^\S \big)^{-1}
\label{E:PTAU}
\end{equation}
(taking values in $\SYM[>]{d}$) and velocities
\begin{equation}
	W_\tau(x) := \frac{3}{2} V_\tau(x)-\frac{1}{2} \BU(x),
	\quad
	V_\tau(x) := \frac{X_\tau(x)-x}{\tau},
\label{E:VELOS}
\end{equation}
in $\L^2(\R^d, \RHO)$. If $\E[\RHO,\BU] := \int_{\R^d} \HA\RHO|\BU|^2 +
\INT[\RHO]$ denotes the total energy, then
\begin{align}
	\E[\RHO_\tau,\BU_\tau] 
		& + \int_{\R^d} {\TST\frac{1}{6}} |W_\tau(x)-\BU(x)|^2 \,\RHO(dx)
\label{E:ENINQ}\\
		& + \int_{\R^d} \Big( P\big( r(x) \big) 
			D_\INT\big( \nabla X_\tau(x)^\S-\ONE \big) \,dx
			+ \TRACE\big( \LM_\tau(dx) \big) \Big) \LS \E[\RHO,\BU],
\nonumber
\end{align}
where $\RHO_\tau := X_\tau \# \RHO$ and $\BU_\tau := W_\tau \circ X_\tau^{-1}$
belongs to $\L^2(\R^d, \RHO_\tau)$. Here $\#$ indicates the push-forward of
measures, defined as $\RHO_\tau(A) := \RHO(X_\tau^{-1}(A))$ for all Borel
subsets $A\subset\R^d$. One can show that $X_\tau$ can be extended by
monotonicity to a $\RHO$-essentially injective map on all of $\R^d$ so that
$\BU_\tau$ is indeed well-defined. 

The function $D_\INT$ is the Bregman divergence of the convex function $S
\mapsto \det(S)^{1-\gamma}$ with $S\in\SYM[>]{d}$. For any $\EPS>0$ there exists
$C_\EPS>0$ with
\begin{equation}
	\sup_{z\in\R^d, |z|=1} \Big| \Big\langle z, 
		\big( \ONE-\det(\ONE+S)^{1-\gamma} (\ONE+S)^{-1} \big) z 
			\Big\rangle \Big| \LS \EPS + C_\EPS D_\INT(S)
\label{E:DIFFE}
\end{equation}
for all $S \in \SYM{d}$ such that $\ONE+S$ is positive definite. 

We refer the reader to \cite{CavallettiSedjroWestdickenberg2019} for motivation,
proofs, and further discussion.

\medskip

\textbf{Step~2.} We now explain in which sense the minimizer of Step~1\
generates an approximate solution of \eqref{E:IEE} for \emph{one timestep}. We
define the interpolants
\begin{equation}
	X_t(x) := x + t V_\tau(X),
	\quad
	W_t(x) := \bigg( 1-\frac{t}{\tau} \bigg) \BU(x)
		+ \frac{t}{\tau} W_\tau(x)
\label{E:INTER}
\end{equation}
for $\RHO$-a.e.\ $x\in\R^d$ and $t\in[0,\tau]$. Then we interpolate density and
velocity as
\begin{equation}
	\RHO_t := X_t \# \RHO,
	\quad
	\BU_t := W_t \circ X_t^{-1}.
\label{E:RVINTER}
\end{equation}
This is well-defined because $X_t$ is $\RHO$-essentially injective.

For any $\eta \in \C^1(\R)$ and $\zeta \in \AF$, we now compute (integrating by
parts)
\begin{align}
	& -\int_0^\tau \eta'(t) \int_{\R^d} \zeta\big( X_t(x) \big) 
		\cdot W_t(x) \,\RHO(dx) \,dt
\label{E:MOMQ}\\
	& \qquad\qquad
		+\eta(\tau) \int_{\R^d} \zeta\big( X_\tau(x) \big) \cdot
			W_\tau(x) \,\RHO(dx)
		-\eta(0) \int_{\R^d} \zeta(x) \cdot \BU(x) \,\RHO(dx)
\nonumber\\
	& \qquad
		= \int_0^\tau \eta(t) \int_{\R^d} 
			\nabla\zeta\big( X_t(x) \big) : 
				\Big( W_t(x)\otimes \dot{X}_t(x) \Big) \,\RHO(dx) \,dt
\nonumber\\
	& \qquad\qquad
		+ \int_0^\tau \eta(t) \int_{\R^d}
				\zeta\big( X_t(x) \big) \cdot \dot{W}_t(x) \,\RHO(dx) \,dt.
\nonumber
\end{align}
Using definition \eqref{E:RVINTER}, we have that
\begin{align*}
	-\int_0^\tau \eta'(t) \int_{\R^d} \zeta\big( X_t(x) \big) \cdot W_t(x) 
			\,\RHO(dx) \,dt
		& = -\int_0^\tau \eta'(t) \int_{\R^d} \zeta(z) \cdot \BU_t(z) 
			\,\RHO_t(dz) \,dt.
\\
	\eta(\tau) \int_{\R^d} \zeta\big( X_\tau(x) \big) \cdot W_\tau(x)
			\,\RHO(dx)
		&= \eta(\tau) \int_{\R^d} \zeta(z) \cdot \BU_\tau(z) \,\RHO_\tau(dz).
\end{align*}

The first integral on the right-hand side of \eqref{E:MOMQ} can be rewritten as
\begin{align}
	& \int_0^\tau \eta(t) \int_{\R^d} 
		\nabla\zeta\big( X_t(x) \big) : 
			\Big( W_t(x)\otimes \dot{X}_t(x) \Big) \,\RHO(dx) \,dt
\label{E:LPOK}\\
	& \qquad
		= \int_0^\tau \eta(t) \int_{\DR^d}
			\nabla\zeta\big( X_t(x) \big) : \Big( W_t(x) \otimes
				\big( V_\tau(x)-W_t(x) \big) \Big) \,\RHO(dx) \,dt
\nonumber\\
	& \qquad\qquad
		+ \int_0^\tau \eta(t) \int_{\DR^d}
			\nabla\zeta\big( X_t(x) \big) : \Big( W_t(x) \otimes
				W_t(x) \Big) \,\RHO(dx) \,dt.
\nonumber
\end{align}
Because of \eqref{E:VELOS} and \eqref{E:INTER}, we have that
\[
	V_\tau(x) - W_t(x) = \bigg( \frac{2}{3}-\frac{t}{\tau} \bigg) 
		\big( W_\tau(x)-\BU(x) \big)
	\quad\text{for all $t\in[0,\tau]$.}
\]
By Young's inequality, for any $\EPS>0$ there exists a constant $C_\EPS>0$ with
\begin{align*}
	& \big\| W_t(x)\otimes \big( V_\tau(x)-W_t(x) \big) \big\|
\\
	& \qquad
		\LS \EPS
			\Bigg( \bigg( 1-\frac{t}{\tau} \bigg) |\BU(x)|^2 
				+ \frac{t}{\tau} |W_\tau(x)|^2 \Bigg) 
			+ C_\EPS \bigg| \frac{2}{3}-\frac{t}{\tau} \bigg| 
				|W_\tau(x) - \BU(x)|^2.
\end{align*}
It follows that
\begin{align}
	& \left| \int_0^\tau \eta(t) \int_{\R^d}
		\nabla\zeta\big( X_t(x) \big) : \Big( W_t(x) \otimes
			\big( V_\tau(x)-W_t(x) \big) \Big) \,\RHO(dx) \,dt \right|
\label{E:ERRK}\\
	& \qquad 
		\LS C\tau \; \Bigg\{  
			\EPS \int_{\R^d} \Big( |\BU(x)|^2 + |W_\tau(x)|^2 \Big) \,\RHO(dx)
				+ C_\EPS \int_{\R^d} |W_\tau(x)-\BU(x)|^2 \,\RHO(dx) \Bigg\},
\nonumber
\end{align}
where $C$ depends on the $\sup$-norms of $\eta$ and $\nabla\zeta$. The integral
multiplied by $\EPS$ can be bounded by the total energy, the one multiplied by
$C_\EPS$ by the energy dissipation; see \eqref{E:ENINQ}. The second integral on
the right-hand side of \eqref{E:LPOK} is
\begin{align*}
	& \int_0^\tau \eta(t) \int_{\R^d} \nabla\zeta\big( X_t(x) \big) :
			\Big( W_t(x) \otimes W_t(x) \Big) \,\RHO(dx) \,dt
\\
	& \qquad
		= \int_0^\tau \eta(t) \int_{\R^d} \nabla\zeta(z) :
			\Big( \BU_t(z) \otimes \BU_t(z) \Big) \,\RHO_t(dz) \,dt
\end{align*}
because of definition \eqref{E:RVINTER}.

The second integral on the right-hand side of \eqref{E:MOMQ} can be rewritten as
\begin{align}
	& \int_0^\tau \eta(t) \int_{\R^d} \zeta\big( X_t(x) \big) \cdot 
			\dot{W}_t(x) \,\RHO(dx) \,dt
\label{E:DEC1}\\
	& \qquad
		= \int_0^\tau \eta(t) \int_{\R^d} 
			\Big( \zeta\big( X_t(x) \big)-\zeta(x) \Big)
				\cdot \frac{W_\tau(x)-\BU(x)}{\tau} \,\RHO(dx) \,dt
\nonumber\\
	& \qquad\qquad
		+ \int_0^\tau \eta(t) \int_{\R^d} \zeta(x) \cdot 
			\frac{W_\tau(x)-\BU(x)}{\tau} \,\RHO(dx) \,dt.
\nonumber
\end{align}
Since $\zeta \in \AF$ we can estimate the first integral on the right-hand of
\eqref{E:DEC1} by
\begin{align}
	& \bigg| \int_0^\tau \eta(t) \int_{\R^d} 
			\Big( \zeta\big( X_t(x) \big)-\zeta(x) \Big)
				\cdot \frac{W_\tau(x)-\BU(x)}{\tau} \,\RHO(dx) \,dt \bigg|
\label{E:SHIFT}\\
	& \qquad
		\LS C \tau \; \Bigg\{
			\EPS \int_{\R^d} |V_\tau(x)|^2 \,\RHO(dx) 
				+ C_\EPS \int_{\R^d} |W_\tau(x)-\BU(x)|^2 \,\RHO(dx) \Bigg\}
\nonumber
\end{align}
for any $\EPS>0$ and a corresponding constant $C_\EPS$, by Cauchy-Schwarz and
Young's inequality. The constant $C$ depends on the $\sup$-norms of $\eta$ and
$\nabla\zeta$. The integral with $V_\tau$ is bounded by the total energy, while
the integral with $W_\tau-U$ is bounded by the energy dissipation; see
\eqref{E:ENINQ}. Notice that $V_\tau$ is a convex combination of $\BU$ and
$W_\tau$; see \eqref{E:VELOS}. Because of equation \eqref{E:EULA}, the last
integral in \eqref{E:DEC1} is
\begin{align}
	& \int_0^\tau \eta(t) \int_{\R^d} \zeta(x) \cdot \frac{W_\tau(x)-\BU(x)}{\tau}
			\,\RHO(dx) \,dt
\label{E:KOLP}\\
	& \qquad
		= \int_0^\tau \eta(t) \int_{\R^d} \nabla\zeta(x) : \Big( 
			\PR_\tau(x) \,dx + \LM_\tau(dx) \Big) \,dt.
\nonumber
\end{align}
We can now argue as in \cite{CavallettiSedjroWestdickenberg2019}. First, we
estimate
\begin{align}
	& \bigg| \int_0^\tau \eta(t) \int_{\R^d} \nabla\zeta(x) :
		\Big( \PR_\tau(x) -P\big( r(x) \big) \ONE \Big) \,dx \,dt \bigg|
\label{E:ERRL}\\
	& \qquad 
		\LS C\tau \; \Bigg\{  
			\EPS \int_{\R^d} U\big( r(x) \big) \,dx
				+ C_\EPS \int_{\R^d} P\big( r(x) \big) 
					D_\INT\big( \nabla X_\tau(x)^\S-\ONE \big) \,dx
						\Bigg\},
\nonumber
\end{align}
using \eqref{E:PTAU}/\eqref{E:DIFFE} and the fact that $P = (\gamma-1)U$ for
polytropic gases. Again the first integral on the right-hand side can be bounded
by the total energy, the second one by the energy dissipation; see
\eqref{E:ENINQ}. Notice that since $\PR_\tau, \ONE$ are symmetric, only the
symmetric part of $\nabla\zeta(x)$ is relevant for the estimate, therefore
\eqref{E:DIFFE} can indeed be used. Finally, the last integral in \eqref{E:KOLP}
can be estimated as
\begin{equation}
	\bigg| \int_0^\tau \eta(t) \int_{\R^d} \nabla\zeta(x) :
			\LM_t(dx) \,dt \bigg|
		\LS C \tau \int_{\R^d} \TRACE\big( \LM_t(dx) \big),
\label{E:RTAU}
\end{equation}
and the integral can be bounded by the energy dissipation; see \eqref{E:ENINQ}.
We have used that $\LM_t$ takes values in positive semidefinite matrices, so
the trace is equivalent to the Frobenius norm. The constant $C$ in
\eqref{E:ERRL} and \eqref{E:RTAU} depends on the $\sup$-norms of $\eta$ and
$\nabla\zeta$. Collecting all terms, we obtain the identity
\begin{align}
	& -\int_0^\tau \eta'(t) \int_{\R^d} 
		\zeta(x) \cdot \BU_t(x) \,\RHO_t(dx) \,dt
\label{E:MOMQ2}\\
	& \qquad\qquad
		+\eta(\tau) \int_{\R^d} \zeta(x) \cdot \BU_\tau(x) \,\RHO_\tau(dx)
		-\eta(0) \int_{\R^d} \zeta(x) \cdot \BU(x) \,\RHO(dx)
\nonumber\\
	& \qquad
		= \int_0^\tau \eta(t) \int_{\R^d} 
			\nabla\zeta(z) :  
				\Big( \BU_t(z)\otimes\BU_t(z) \Big) \,\RHO_t(dz) \,dt
\nonumber\\
	& \qquad\qquad
		+ \int_0^\tau \eta(t) \int_{\R^d}
				P\big( r(x) \big) \nabla\cdot\zeta(x) \,dx \,dt
			\quad + \quad \text{ERROR}.
\nonumber
\end{align}
Note that the spatial integral in \eqref{E:MOMQ2} involving $P(r(x))$ \emph{does
not depend on $t$}. The $\text{ERROR}$ term collects the contributions from
\eqref{E:ERRK}, \eqref{E:SHIFT}, and \eqref{E:ERRL}. Thus
\begin{equation}
	|\text{ERROR}|
		\LS C\tau \; \bigg( \EPS \E[\RHO,\BU] 
			+ C_\EPS \Big( \E[\RHO,\BU]-\E[\RHO_\tau,\BU_\tau] \Big) \bigg).
\label{E:ERROR}
\end{equation}

We can now iterate the procedure outlined above, using the final data
$(\RHO_\tau, \BU_\tau)$ of one timestep as the initial data for the next
minimization problem. Notice that because of \eqref{E:ENINQ} the new initial
data again has finite total energy and is therefore admissible. We will denote
by $(\RHO_\tau^k, \BU_\tau^k)$ and $X_\tau^k, W_\tau^k$ the corresponding
approximate solutions and transport maps/velocities at times $t_\tau^k :=
k\tau$, with $k\in\N_0$. Here we use the subscript $\tau$ to emphasize the
\emph{dependence on the timestep}. Let
\begin{equation}
	X_{\tau,t}(x) := \frac{t_\tau^{k+1}-t}{\tau} x 
		+ \frac{t-t_\tau^k}{\tau} X_\tau^k(x),
	\quad
	W_{\tau,t}(x) := \frac{t_\tau^{k+1}-t}{\tau} \BU_\tau^k(x)
		+ \frac{t-t_\tau^k}{\tau} W_\tau^k(x)
\label{E:INTR2}
\end{equation}
for $\RHO_\tau^k$-a.e.\ $x\in\R^d$ be the interpolation of transport/velocity,
and 
\begin{equation}
	\RHO_{\tau,t} := X_{\tau,t} \# \RHO_\tau^k,
	\quad
	\BU_{\tau,t} := W_{\tau,t} \circ X_{\tau,t}^{-1}
\label{E:INTR3}
\end{equation}
the interpolated density and Eulerian velocity, for $t \in [t_\tau^k,
t_\tau^{k+1}]$ and $k\in\N_0$.

If now $\eta \in \C^1_c([0,\infty))$ and $\zeta \in \AF$, then
$(\RHO_\tau,\BU_\tau)$ satisfies the analogue of \eqref{E:MOMQ2}, with time
integration over $[0,\infty)$. When adding up the error terms, we find that the
contributions from the first term on the right-hand side of \eqref{E:ERROR}
amounts to adding $\tau$ for every timestep because $\E[\RHO_\tau^k,
\BU_\tau^k]$ is bounded by $\bar{E}$ uniformly in $k,\tau$; see \eqref{E:ENINQ}.
Since $\eta$ has compact support in some interval $[0,T]$, this contibution is
bounded by $CT \, \EPS \bar{E}$, which can be made arbitrarily small, uniformly
in $\tau$, by choosing $\EPS$ small. On the other hand, the contributions from
the second term on the right-hand side of \eqref{E:ERROR} can be summed up over
all timesteps because the dissipation terms form a telescope sum. This
contribution is therefore bounded by $C\tau \, C_\EPS \bar{E}$, which converges
to zero as $\tau\to 0$. We conclude that the error term in the momentum equation
vanishes in the limit. The continuity equation is satisfied, by construction,
but with respect to a different velocity field, which is obtained by
transporting the piecewise in time constant velocity $V_\tau$ along the
transport map; see \cite{CavallettiSedjroWestdickenberg2019} for details.

\medskip

\textbf{Step~3.} We will now pass to the limit $\tau\to 0$. The main issue
here is to define suitable Young measures that can represent the weak limits of
non-linear terms of the form $\RHO\BU\otimes\BU$ and $P(\RHO)$. We will use two
such Young measures: First, let
\[
	\int_{\DR^d\times\BFX} \phi(x,\FX) \,\epsilon_\tau(t, dx, d\FX) 
		:= \int_{\R^d} \phi\big( x, r_\tau^k(x), m_\tau^k(x) \big)
			\; h\big( r_\tau^k(x), m_\tau^k(x) \big) \,dx
\]
for all $\phi \in \C(\DR^d\times\BFX)$ and $t\in [t_\tau^k, t_\tau^{k+1})$,
$k\in\N_0$, where
\[
	\RHO_\tau^k =: r_\tau^k \LEB^d,
	\quad
	m_\tau^k := r_\tau^k \BU_\tau^k.
\]
Notice that $\epsilon_\tau$ is \emph{piecewise constant in time}. Its total
variation is given by
\[
	\|\epsilon_\tau(t,\cdot)\|_{\M(\DR^d\times\BFX)}
		= \int_{\DR^d\times\BFX} \epsilon_\tau(t, dx, d\FX)
		= 1 + \E[\RHO_\tau^k, \BU_\tau^k]
\]
for $t\in [t_\tau^k, t_\tau^{k+1})$, which is bounded by the initial total
energy $\bar{E}$ because of \eqref{E:ENINQ}, uniformly in $k, \tau$. Indeed we
have more: The function
\[
	t \mapsto E_\tau(t) := \|\epsilon_\tau(t,\cdot)\|_{\M(\DR^d\times\BFX)} - 1,
\]
which equals the total energy, is \emph{non-increasing in time} with $E_\tau(0)
= \bar{E}$. For any sequence $\tau_n \longrightarrow 0$ as $n\to\infty$, there
now exists a subsequence (not relabeled, for simplicity) and a Young measure
$\epsilon \in \BBE^*$ such that $\epsilon_{\tau_n} \WEAK \epsilon$ weak* in
$\BBE^*$ as $n\to\infty$ (which means testing against functions in $\BBE$), by
Banach-Alaoglu theorem. Because of Helly's selection theorem, we may extract
another subsequence if necessary and obtain that $E_{\tau_n}(t) \longrightarrow
E(t)$ pointwise for all $t\in[0,\infty)$, with total enery
\[
	t \mapsto E(t) := \|\epsilon(t,\cdot)\|_{\M(\DR^d\times\BFX)} - 1
\]
non-increasing in time and bounded by $\bar{E}$.

For the second Young measure we use a \emph{continuous time interpolation}: Let
\[
	\int_{\DR^d\times\BFX} \phi(x,\FX) \,\nu_\tau(t, dx, d\FX) 
		:= \int_{\R^d} \phi\big( x, r_{\tau,t}(x), m_{\tau,t}(x) \big)
			\; h\big( r_{\tau,t}(x), m_{\tau,t}(x) \big) \,dx
\]
for all $\phi \in \C(\DR^d\times\BFX)$ and $t\in [0,\infty)$, with
$(\RHO_{\tau,t}, \BU_{\tau,t})$ defined in \eqref{E:INTR3}/\eqref{E:INTR2},
\begin{equation}
	\RHO_{\tau,t} =: r_{\tau,t} \LEB^d,
	\quad
	m_{\tau,t} := r_{\tau,t} \BU_{\tau,t}.
\label{E:TAUT}
\end{equation}
Again we must establish uniform boundedness. The map
\[
	t \mapsto \int_{\R^d} \HA |\BU_{\tau,t}(z)|^2 \,\RHO_{\tau,t}(dx)
		= \int_{\R^d} \HA |W_{\tau,t}(x)|^2 \,\RHO_\tau^k(dx)
\]
is convex for $t \in [t_\tau^k, t_\tau^{k+1}], k\in\N_0$; see \eqref{E:INTR2}.
Similarly, the map
\[
	t \mapsto \INT[X_{\tau,t}| \RHO_{\tau,t}] 
		:= \int_{\R^d} U\big( r_\tau^k(x) \big)
			\det\big( \nabla X_{\tau,t}(x)^\S \big)^{1-\gamma} \,dx
\]
is convex in each timestep; see the proof of Proposition~5.23 in
\cite{CavallettiSedjroWestdickenberg2019}. Thus
\[
	t  \mapsto \int_{\R^d} \HA |\BU_{\tau,t}(z)|^2 \,\RHO_{\tau,t}(dx)
		+ \INT[X_{\tau,t}| \RHO_{\tau,t}]
	\quad\text{with $t\in [t_\tau^k, t_\tau^{k+1}]$}
\]
is convex. In fact, it is bounded by the total energy at time $t_\tau^k$; see
Proposition~5.23 in \cite{CavallettiSedjroWestdickenberg2019}. Since
$\INT[\RHO_{\tau,t}] \LS \INT[X_{\tau,t}| \RHO_{\tau,t}]$ (see Remark~5.14 in
\cite{CavallettiSedjroWestdickenberg2019}) it follows that
\begin{equation}
	\E[\RHO_{\tau,t}, \BU_{\tau,t}] \LS \E[\RHO_\tau^k, \BU_\tau^k]
	\quad\text{for $t\in[t_\tau^k, t_\tau^{k+1}], k\in\N_0$.}
\label{E:STEPBD}
\end{equation}
In particular, the total variation of $\nu_{\tau,t}$ can be bounded by the total
variation of $\epsilon_{\tau,t}$ for all $t\in[0,\infty)$, which in the limit
$\tau\to 0$ will produce Property~\eqref{L:NU} of Definition~\ref{D:DISSI}.
Extracting another subsequence of $\tau_n\to 0$ if necessary (not relabeled),
we may assume that $\nu_{\tau_n} \WEAK \nu$ weak* in $\BBE^*$ as $n\to\infty$,
for some $\nu \in \BBE^*$.

Moreover, since the curves $t \mapsto (\RHO_{\tau,t}, \BM_{\tau,t})$ are
Lipschitz continuous with respect to the Wasserstein distance and the bounded
Lipschitz norm, respectively, uniformly in $\tau$, one can use the Arzelà-Ascoli
theorem to conclude the existence of another subsequence (not relabeled) and of
limit density/momentum $(\RHO,\BM)$ such that
\[
	(\RHO_{\tau_n,t}, \BM_{\tau_n,t}) \longrightarrow (\RHO_t, \BM_t)
	\quad\text{weak* in the sense of measures,}
\]
for all $t\in [0,\infty)$. We refer the reader to Lemmas~6.1 and 6.2 in
\cite{CavallettiSedjroWestdickenberg2019}; see also the proof of
Lemma~\ref{L:CLOSED} for a similar argument. Then lower semicontinuity of the
internal energy (see Definition~\ref{D:INT}) and the fact the the total energy
stays uniformly bounded imply that the limit density $\RHO_t$ is again
absolutely continuous with respect to the Lebesgue measure. Indeed since $U(r) =
\kappa r^\gamma$ with $\gamma>1$ has superlinear growth at infinity, the
internal energy would be infinite if the density had any singular parts.
Similarly, lower semicontinuity of the kinetic energy forces $\BM_t$ to be
absolutely continuous with respect to $\RHO_t$, and thus in turn with respect to
the Lebesgue measure; see again the proof of Lemma~\ref{L:CLOSED} for a similar
argument.

\medskip

\textbf{Step~4.} We now establish the decomposition in
Definition~\ref{D:DISSI}~\eqref{L:DECOMPOSITION}. We consider only the Young
measure $\nu$ since the argument for $\epsilon$ is analogous. For any $\tau>0$,
we define the weakly measurable map $\upsilon^\tau \colon [0,\infty) \times \R^d
\longrightarrow \SP(X_V)$ by 
\[
	\int_0^\infty \int_{\R^d} \phi(t,x) \int_{X_V} \varphi(\FX) 
			\,\upsilon^\tau_{t,x}(d\FX) \,dx \,dt 
		:= \int_0^\infty \int_{\R^d} \phi(t,x) \varphi\big(
			r_{\tau,t}(x), m_{\tau,t}(x) \big) \,dx \,dt
\]
for all $\phi \in \C_c([0,\infty)\times\R^d)$ and $\varphi \in \C_c(X_V)$; see
\eqref{E:TAUT} and Remark~\ref{R:VAC} for notation. As was explained there, the
set $X_V$ is topologically equivalent to $[0,\infty)\times \R^d$, where $V$ is
identified with $\{0\}\times \R^d$. Thus $\upsilon^\tau_{t,x}$ is the Dirac
measure at $(r_{\tau,t}(x),m_{\tau,t}(x))$; in vaccuum it assigns mass one to
the point in $V$ corresponding to the velocity $\BU_{\tau,t}(x)$. Since
$\upsilon^\tau_{(t,x)}$ is a probability measure for a.e.\ $(t,x)$ and all
$\tau$, we have that 
\[
	\int_A \upsilon^\tau_{t,x}(X_V) \,dx \,dt = \LEB^{d+1}(A)
	\quad\text{for all $A \subset [0,\infty)\times \R^d$ Borel.}
\]
Consider now the compact sets $\Omega_N := [0,N] \times \bar{B}_N(0)$ with
$N\in\N$. Then 
\begin{align*}
	\sup_{\tau>0} \int_{\Omega_N} \int_{X_V} h(\FX) 
			\,\upsilon^\tau_{t,x}(d\FX)	\,dx \,dt
		& \LS \sup_{\tau>0} \int_0^N \int_{\R^d} \Bigg(
			\RHO_\tau + \bigg( \frac{|\BM_\tau|^2}{2\RHO_\tau} + U(\RHO_\tau) 
				\bigg) \Bigg)
\\
		& \vphantom{\int}
			\LS N (1+\bar{E}) < +\infty,
\end{align*}
with $h$ extended to a lower semicontinuous and convex map
\[
	(\RHO,\BM) \mapsto \begin{cases}
		\DST \RHO+\bigg( \frac{|\BM|^2}{2\RHO}+U(\RHO) \bigg) 
			& \text{if $(\RHO,\BM) \in (0,\infty)\times\R^d$}
\\
		0 & \text{if $(\RHO,\BM) \in V$}
\\
		+\infty & \text{otherwise}
	\end{cases}
\]
It follows that the map $(\RHO,\BM) \mapsto h(\RHO,\BM)$ is $\inf$-compact,
i.e., its sublevel sets are compact. We can then apply Theorem~4.3.2 in
\cite{CastaingRaynaudDeFitteValadier2004} to conclude that the family
$\{\upsilon^\tau\}_\tau$, when restricted to $\Omega_N \times X_V$, is
\emph{strictly tight} and therefore (sequentially) relatively compact with
respect to the topology of $S$-stable convergence; see Section~2.1 and
Theorem~4.3.5 in \cite{CastaingRaynaudDeFitteValadier2004}. In particular, if
$\tau_n\to 0$ is the sequence of timesteps that generates the Young measures
$\nu, \epsilon$ of Step~3, by a diagonal argument with $N\to\infty$, we obtain a
weakly measurable map $\upsilon \colon [0,\infty)\times \R^d \longrightarrow
\SP(X_V)$ such that
\begin{align}
	& \int_0^\infty \int_{\R^d} \phi(t,x) \varphi\big(
		r_{\tau_n,t}(x), m_{\tau_n,t}(x) \big) \,dx \,dt
\label{E:COVO}\\
	& \qquad
		\longrightarrow \int_0^\infty \int_{\R^d} \phi(t,x) 
			\int_{X_V} \varphi(\FX) \,\upsilon_{t,x}(d\FX) \,dx \,dt 
	\quad\text{as $n\to\infty$}
\nonumber
\end{align}
for all $\phi \in \C_c([0,\infty)\times\R^d)$ and $\varphi\in\C_c(X_V)$, along
suitable subsequences that are not relabeled, for simplicity. Using a similar
argument, we also obtain
\begin{equation}
\begin{gathered}
	\RHO_t(dx) 
		= \int_{X_V} \RHO \,\upsilon_{t,x}\big( d(\RHO,\BM) \big) \,dx,
\\
	\BM_t(dx) 
		= \int_{X_V} \BM \,\upsilon_{t,x}\big( d(\RHO,\BM) \big) \,dx
\end{gathered}
\label{E:IDES}
\end{equation}
for a.e.\ $t$. Recall that $\RHO_t, \BM_t$ are absolutely continuous with
respect to the Lebesgue measure $\LEB^d$. By convexity and Jensen's inequality,
it follows that
\begin{equation}
\begin{aligned}
	P\big( r_t(x) \big)
		& \LS \int_{X_V} P(\RHO) \,\upsilon_{t,x}\big( d(\RHO,\BM) \big),
\\
	\frac{m_t(x)\otimes m_t(x)}{r_t(x)}
		& \LS \int_{X_V} \frac{\BM\otimes\BM}{\RHO} 
			\,\upsilon_{t,x}\big( d(\RHO,\BM) \big)
\end{aligned}
\label{E:FIRS}
\end{equation}
in the sense of symmetric matrices, with $\RHO_t = r_t \LEB^d$ and $m_t :=
r_t\BU_t$, for a.e.\ $(t,x)$. Because of Step~3 and \eqref{E:COVO}, we observe
that for all $\varphi \in \C_c(X_V)$ and a.e.\ $t$
\begin{equation}
	\int_{X_V} \varphi(\FX) \,\upsilon_{t,x}(d\FX) \,dx
		= \int_{X_V} \frac{\varphi(\FX)}{h(\FX)} \,\nu_t(dx, d\FX).
\label{E:EQUL}
\end{equation}
Here we have used that $\varphi$ vanishes on $\BFX\setminus X_V$, so it is
sufficient to integrate over $X_V$ in \eqref{E:EQUL} instead of $\BFX$. By
monotone convergence (recall that $\nu_t$ has finite total variation), we
generalize \eqref{E:EQUL} to $\varphi \in \C(X_V)$ with $|\varphi(\FX)| \LS C
h(\FX)$ for all $\FX \in X$, where $C$ is some constant. From this, we obtain
the inequalities
\begin{equation} 
\begin{gathered}
	\int_{X_V} P(\RHO) \,\upsilon_{t,x}\big( d(\RHO,\BM) \big) \,dx
		\LS \int_{\BFX} \frac{P(\RHO)}{h(\RHO,\BM)} 
			\,\nu_t\big(dx, d(\RHO,\BM) \big),
\\
	\int_{X_V} \frac{\BM\otimes\BM}{\RHO} 
			\,\upsilon_{t,x}\big( d(\RHO,\BM) \big) \,dx
		\LS \int_{\BFX} \frac{\BM\otimes\BM/\RHO}{h(\RHO,\BM)} 
			\,\nu_t\big(dx, d(\RHO,\BM) \big)
\end{gathered}
\label{E:EQLL}
\end{equation}
in the sense of symmetric matrices. Notice that on the right-hand side of
\eqref{E:EQLL} we included the contributions that the Young measure $\nu_t$ may
have ``at infinity'', i.e., in the set $\BFX \setminus X_V$. Combining
\eqref{E:FIRS} and \eqref{E:EQLL}, we have
\begin{equation}
\begin{aligned}
	P\big( r_t(x) \big) \,dx 
		& \LS \lbrack P(\RHO) \rbrack(t,dx),
\\
	r_t(x) \BU_t(x)\otimes\BU_t(x) \,dx 
		& \LS \lbrack \BM\otimes\BM/\RHO \rbrack(t,dx)
\end{aligned}
\label{E:LSS}
\end{equation}
in the sense of symmetric matrices. The differences between the left- and
right-hand sides of \eqref{E:LSS} define the defect measures $\DSC, \DSP$ in
Definition~\ref{D:DISSI}~\eqref{L:DECOMPOSITION}. For the second Young measure
$\epsilon$ and $\llbracket \cdot \rrbracket$ the argument works the same.

We can then argue as in \cite{CavallettiSedjroWestdickenberg2019} to finish the
proof of Proposition~\ref{P:EXISTENCE}.
\end{proof}

\begin{remark}\label{R:CHANGES} In \cite{CavallettiSedjroWestdickenberg2019} a
different time interpolation was used: The velocity was updated to the value
$W_\tau$ at the beginning of the timestep, then transported constantly. As a
consequence, the total energy could jump upward first, before decreasing
continuously to its value at the end of the timestep. Here we use the piecewise
linear interpolation \eqref{E:INTR2} instead, which gives us the energy
inequality \eqref{E:STEPBD}. The necessary changes in the derivation of the
momentum equation have been outlined in Step~2 above. The proof of Lipschitz
continuity of the momentum $t \mapsto \BM_t$ in Lemma~6.2 in
\cite{CavallettiSedjroWestdickenberg2019} must also be adapted, taking into
account the error estimate \eqref{E:SHIFT}. Note that \eqref{E:SHIFT} is already
uniform in the $\BL(\R^d)$-norm of the test function. Indeed, since the constant
$C$ on the right-hand side of \eqref{E:SHIFT} only depends on the $\sup$-norm of
$\nabla\zeta$, one can take the supremum over all $\zeta \in \BL_1(\R^d)$ to
conclude that the error is small in the bounded Lipschitz norm. We omit the
details. Different from \cite{CavallettiSedjroWestdickenberg2019}, here we use a
sum in the definition of the $\BL(\R^d)$-norm \eqref{E:BKNORM}, not a $\max$.
Both forms are equivalent. Moreover, we do not assume that the initial total
momentum vanishes.
\end{remark}


\subsection{Properties}

The following lemma makes precise in which sense the quantity \eqref{E:INTRACE}
is in fact a measure of the acceleration of the dissipative solution.

\begin{lemma}\label{L:MOMS} Suppose $(\epsilon, \nu)$ is a dissipative solution
of the isentropic Euler equations \eqref{E:IEE}, as introduced in
Definition~\ref{D:DISSI}. Let $t\mapsto\BM_t$ be the momentum associated to
$(\epsilon, \nu)$. With $a(t|\epsilon, \nu)$ defined by \eqref{E:INTRACE}, we
have that 
\[
	a(t|\epsilon, \nu) = |\BM'|(t)
	\quad\text{for a.e.\ $t \in [0,\infty)$,}
\] 
where $|\BM'|$ is the metric derivative of $\BM$ induced by the distance
\eqref{E:DULIP}.
\end{lemma}

\begin{proof}
For any $\eta \in \C^1_c((0,\infty))$ and $\zeta \in \C^1_c(\R^d; \R^d)$ we have
that
\begin{equation}
	-\int_0^\infty \eta'(t) \int_{\R^d} \zeta(x) \cdot \BM_t(dx) \,dt
		= \int_0^\infty \eta(t) \int_{\R^d} \nabla\zeta(x) : \FLUX(t,dx) \,dt
\label{E:OPP}
\end{equation}
because of the momentum equation; see Property~\eqref{L:CONSLAWS} of
Definition~\ref{D:DISSI}. Here 
\[
	\FLUX(t,dx) = \lbrack \BM\otimes\BM/\RHO 
		\rbrack(t,dx) + \llbracket P(\RHO) \rrbracket(t,dx) \ONE
\]
is the momentum flux. In \eqref{E:OPP} it suffices to integrate over $\R^d$ on
the right-hand side (instead of the compactification $\DR^d$) because $\zeta$
vanishes at infinity. Consider now a sequence of test functions $\eta^k$ that
converges pointwise a.e.\ to the characteristic function of some time interval
$[s,t]$ with $0\LS s\LS t$. Since the map $r \mapsto \BM_r$ is Lipschitz
continuous with respect to the bounded Lipschitz norm (testing against $\zeta
\in \BL(\R^d;\R^d)$), we can pass to the limit $k \to \infty$ in \eqref{E:OPP}
to obtain
\begin{equation}
	-\int_{\R^d} \zeta(x) \cdot \big( \BM_t(dx)-\BM_s(dx) \big)
		= \int_s^t \int_{\R^d} \nabla\zeta(x) : \FLUX(r,dx) \,dr.
\label{E:IDQ}
\end{equation}
With $\|\cdot\|$ the operator norm induced by the Euclidean norm $|\cdot|$ on
$\R^d$, we have that
\[
	\|\zeta\|_{\LIP(\R^d)} \LS 1
	\quad\Longrightarrow\quad
	\|\nabla\zeta(x)\| \LS 1
	\quad\Longrightarrow\quad
	\xi\cdot\nabla\zeta(x)\xi \LS |\xi|^2
\]
for all $x, \xi \in \R^d$. Recall that $\zeta$ is differentiable, by assumption.
Then
\begin{equation}
	\left| \int_{\R^d} \zeta(x) \cdot \big( \BM_t(dx)-\BM_s(dx) \big) \right|
		\LS \int_s^t \int_{\R^d} \TRACE\big( \FLUX(r,dx) \big) \,dr. 
\label{E:TRIN}
\end{equation}
Recall that $\FLUX$ takes values in the symmetric, positive semi-definite
matrices so that its trace is non-negative and we can change the domain of
integration to $\DR^d$.

We claim that \eqref{E:TRIN} remains true for all maps $\zeta \colon \R^d
\longrightarrow \R^d$ with $\|\zeta\|_{\LIP(\R^d)} \LS 1$.  Assuming for the
moment that the claim is true, we take the supremum on both sides of
\eqref{E:TRIN} over such $\zeta$ and conclude that 
\begin{equation}
	d(\BM_t,\BM_s) \LS \int_s^t \int_{\DR^d} \TRACE\big( \FLUX(r,dx) \big) \,dr
	\quad\text{for all $0\LS s\LS t$;}
\label{E:ONESIDE}
\end{equation}
see \eqref{E:DULIP}. In particular, the map $t\mapsto a(t|\epsilon,\nu)$ defined
in \eqref{E:INTRACE} is an \emph{upper bound} of the metric derivative $|\BM'|$
for all times. On the other hand, we can use the test function $\zeta = \ID$ in
the momentum equation because $\ID \in \AF$. This gives 
\begin{equation}
	- \int_0^\infty \eta'(t) \int_{\R^d} x \cdot \BM_t(dx) \,dt
		= \int_0^\infty \eta(t) \int_{\DR^d} \TRACE\big( \FLUX(t,dx) \big) \,dt
\label{E:OPP2}
\end{equation}
Note that we integrate over $\DR^d$ on the right-hand side of \eqref{E:OPP2}.
Considering again a sequence of test functions $\zeta^k$ converging pointwise
a.e.\ to the characteristic function of $[s,t]$, we want to pass to the limit
$k\to\infty$ on either side of \eqref{E:OPP2}. This time we may not have
Lipschitz continuity of the map $t \mapsto \int_{\R^d} x\cdot \BM_t(dx)$ but
this map is still in $\L^\infty([0,\infty))$. Then the Lebesgue differentiation
theorem implies
\begin{equation}
	-\int_{\R^d} x \cdot \big( \BM_t(dx)-\BM_s(dx) \big)
		= \int_s^t \int_{\DR^d} \TRACE\big( \FLUX(r,dx) \big) \,dr
\label{E:PLO}
\end{equation}
for \emph{almost every} $0\LS s\LS t$. Estimating the left-hand side of
\eqref{E:PLO} from above by taking the supremum over all Lipschitz continuous
test functions with $\|\zeta\|_{\LIP(\R^d)} \LS 1$ (of which $\zeta = \ID$ is
one), we obtain the opposite inequality to \eqref{E:ONESIDE}, and therefore
equality for generic $0\LS s\LS t$. It only remains to prove the claim made
above.

To this end, consider $\varphi \in \C^1_c(\R^d) $ with $\varphi(\R^d)
\subset [0,1]$ and 
\[
	\text{$\varphi(x) = 1$ if $|x|\LS 1$,}
	\quad
	\text{$\varphi(x) = 0$ if $|x|\GS 2$.}
\]
For any $R, \EPS > 0$ we define the rescaled cut-off function/mollifier
\[
	\eta_R(x) := \varphi(x/R),
	\quad
	\varphi_\EPS(x) := \EPS^{-d} \varphi(x/\EPS)
\]
where $x\in \R^d$. For given $\zeta \colon \R^d \longrightarrow \R^d$ with
$\|\zeta\|_{\LIP(\R^d)} \LS 1$ let 
\[
	\zeta_\EPS := \zeta\star\varphi_\EPS
	\quad\text{and}\quad
	\zeta_{R,\EPS} := \eta_R \zeta_\EPS,
\]
with the latter being an element of $\C^1_c(\R^d)$ for every $R, \EPS > 0$. We
still have \eqref{E:IDQ} with $\zeta_{R,\EPS}$ in place of $\zeta$. We decompose
the right-hand side in the form
\begin{align}
	\int_s^t \int_{\R^d} \nabla\zeta_{R,\EPS}(x) : \FLUX(r,dx) \,dr
		& = \int_s^t \int_{\R^d} \frac{1}{R} \nabla\eta\Big( \frac{x}{R} \Big)
			\otimes \zeta_\EPS(x) : \FLUX(r,dx) \,dr
\nonumber\\
		& + \int_s^t \int_{\R^d} \eta_R(x) \nabla\zeta_\EPS(x) 
			: \FLUX(r,dx) \,dr.
\label{E:POLL}
\end{align}
By the properties of mollifications, the assumption on $\zeta$ implies
$\|\nabla\zeta_\EPS \|_{\L^\infty(\R^d)} \LS 1$ so that $|\zeta_\EPS(x)| \LS A +
|x|$ for all $x\in \R$, with $A\GS 0$ some constant. Then
\[
	\bigg| \int_{\R^d} \frac{1}{R} \nabla\eta\Big( \frac{x}{R} \Big) 
		\otimes \zeta_\EPS(x) : \FLUX(r,dx) \bigg| 
		\LS C \frac{A+2R}{R} \int_{\R^d} \ONE_{\complement B_R(0)}(x)
			\,\TRACE\big( \FLUX(r,dx) \big),
\]
with $C$ a constant depending only on $\nabla\eta$, for any fixed $r$. Note that
this estimate is uniform in $\EPS$. As the integrand converges pointwise to zero
as $R\to\infty$, the integral vanishes in the limit, by dominated convergence.
Moreover, since the $x$-integral over $\TRACE(\FLUX)$ is bounded uniformly in
time, we can pass to the limit $R\to\infty$ also in the corresponding time
integral in \eqref{E:POLL}, using dominated convergence once again. In a similar
fashion, we argue that the second integral on the right-hand side of
\eqref{E:POLL} is bounded by $\int_s^t \int_{\R^d} \TRACE(\FLUX(r,dx)) \,dr$,
uniformly in $\EPS, R$. This gives the right-hand side of \eqref{E:TRIN}. To
pass to the limit on the left-hand side of \eqref{E:TRIN}, we first let $\EPS\to
0$ with $R$ fixed. Since $\BM_t$ is absolutely continuous with respect to the
Lebesgue measure and, in fact, in $\L^p(\R^d)$ for some $p>1$ (see
\eqref{E:SPACES}), we conclude that
\[
	\int_{\R^d} \eta_R(x) \zeta_\EPS(x)\cdot \BM_t(dx)
		\longrightarrow \int_{\R^d} \eta_R(x) \zeta(x)\cdot \BM_t(dx)
	\quad\text{as $\EPS\to 0$,}
\]
using the properties of mollifications. Moreover, since $|\zeta(x)| \LS A + |x|$
for all $x\in\R^d$ and some constant $A\GS 0$, and since $\BM_t$ has finite
first moment, we can again use the dominated convergence theorem to obtain 
\[
	\int_{\R^d} \eta_R(x) \zeta(x)\cdot \BM_t(dx) 
		\longrightarrow \int_{\R^d} \zeta(x) \cdot \BM_t(dx)
	\quad\text{as $R\to\infty$.}
\]
For the integral involving $\BM_s$ we argue analogously.
\end{proof}


\section{Minimal Acceleration}
\label{S:MA}

For initial data $(\bar{\RHO}, \bar{\BM})$ as in \eqref{E:DATA}, we define the
set
\begin{equation}
	S := \big\{ \text{$(\epsilon, \nu)$ dissipative solution 
		of \eqref{E:IEE} with initial data $(\bar{\RHO}, \bar{\BM})$} \big\},
\label{E:SINIT}
\end{equation}
which is non-empty; see Proposition~\ref{P:EXISTENCE}. We now introduce a
topology such that \eqref{E:SINIT} becomes a \emph{compact metric space}. This
requires a suitable notion of convergence of Young measures. Recall that Young
measures are elements of the dual space $\BBE^*$ (see \eqref{E:ESTAR}), which
comes equipped with the weak* topology. By Banach-Alaoglu theorem, bounded sets
in $\BBE^*$ are weak* precompact. Moreover, since the Banach space $\BBE$ (see
\eqref{E:BBE}) is separable, the weak* topology is \emph{metrizable} on such
bounded sets, so that \emph{compactness and sequential compactness are
equivalent}. We will say that a sequence of dissipative solutions $(\epsilon^k,
\nu^k) \in S$ converges to $(\epsilon, \nu)$ if 
\begin{equation}
	\left.\begin{aligned}
		\epsilon^k & \WEAK \epsilon
\\
		\nu^k & \WEAK \nu 
	\end{aligned} \, \right\}
	\quad\text{weak* in $\BBE^*$ as $k\to\infty$,}
\label{E:YOUNC}
\end{equation}
which is defined in terms of testing against functions in $\BBE$.

\begin{lemma}
The set $S$ is a (sequentially) compact metric space under \eqref{E:YOUNC}
\end{lemma}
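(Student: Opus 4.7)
The plan is to verify that $S$ is bounded and closed in the weak* topology of $\BBE^* \times \BBE^*$, with metrizability following from separability of $\BBE$. For metrizability: since $\BBE$ (see \eqref{E:BBE}) is separable, closed bounded balls in $\BBE^*$ are metrizable in the weak* topology by a standard consequence of Banach-Alaoglu. For boundedness: Definition~\ref{D:DISSI}\eqref{L:EPSILON} gives $\|\epsilon_t\|_{\M(\dot\R^d \times \BFX)} = 1 + E(t) \LS 1 + \bar{E}$ uniformly in $t$, and \eqref{L:NU} yields the analogous bound $\|\nu_t\| \LS 1 + \bar{E}$. Thus $S$ sits inside a bounded ball in $\BBE^* \times \BBE^*$, which is weak* sequentially compact and metrizable.

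The main task is closure of $S$ under the convergence \eqref{E:YOUNC}. Given a sequence $(\epsilon^k, \nu^k) \in S$ converging weak* to $(\epsilon, \nu)$, I would define candidate density/momentum by the pairings $\RHO_t(dx) := \llbracket \RHO \rrbracket(t,dx)$ and $\BM_t(dx) := \llbracket \BM \rrbracket(t,dx)$, using that these expressions are linear in $\epsilon$. For any $\eta \otimes \zeta \in \C^1_c([0,\infty)) \otimes \AF$, the integrand $(t,x,\FX) \mapsto \eta(t) \zeta(x) \cdot (\RHO/h)(\FX)$ lies in $\BBE$ because $\RHO/h$ and $\BM/h$ belong to the algebra $\ALG$ on $\BFX$ (this is precisely why $\BFX$ and $\W(X)$ were constructed in Section~\ref{SS:YM}). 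Hence the compatibility conditions \eqref{L:COMPATIBILITY} pass to the weak* limit. The non-linearities $\lbrack \BM\otimes\BM/\RHO \rbrack$ and $\llbracket P(\RHO) \rrbracket$ appearing in the momentum equation are likewise linear in $\nu$ and $\epsilon$ respectively, with admissible normalized integrands in $\ALG$, so the conservation laws \eqref{L:CONSLAWS} are preserved in the limit. The initial condition \eqref{L:DATA} follows by the same testing with $\eta(0) \neq 0$.

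The remaining properties are standard: \eqref{L:BOUNDS}, \eqref{L:EPSILON}, \eqref{L:NU} pass to the limit by lower semicontinuity of total variation under weak* convergence, applied to appropriate cut-offs on $\BFX$. The Lipschitz estimates \eqref{L:LIPSCHITZ} persist because the bounded Lipschitz and Wasserstein norms are defined by testing against fixed classes of functions, so each bound $\int \zeta\cdot(\BM^k_{t_1} - \BM^k_{t_2}) \LS L\bar{E} |t_1-t_2|$ survives the limit. The decomposition \eqref{L:DECOMPOSITION} is obtained exactly as in Step~4 of the proof of Proposition~\ref{P:EXISTENCE}, via Jensen's inequality applied to the disintegration of the limit Young measures, with defect measures $\DDC, \DDP, \DSC, \DSP$ defined as the differences between Young-measure-averaged and pointwise values of the relevant convex functions. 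The Eulerian velocity property \eqref{L:VELOCITY} then follows from finiteness of $N(t)$ together with the decomposition.

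The main obstacle is to confirm that all integrands appearing in the defining conditions genuinely lie in $\BBE$, so that weak* convergence in $\BBE^*$ applies to them. This is not automatic because functions such as $\BM\otimes\BM/\RHO$ and $P(\RHO)$ are unbounded and singular near vacuum or at infinity in the state space. The resolution is built into the construction in Section~\ref{SS:YM}: the weights $1/h$ tame these singularities, and the compactification $\BFX$ is designed so that $\C_b$-extensions exist for all quantities in $\W(X)$ (and hence in the closed subalgebra $\ALG$). Once this identification is in place, the argument reduces to the routine verification above.
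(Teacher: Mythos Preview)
Your overall architecture---uniform boundedness in $\BBE^*$ via Definition~\ref{D:DISSI}\eqref{L:EPSILON}/\eqref{L:NU}, metrizability from separability of $\BBE$, and closure under \eqref{E:YOUNC}---matches the paper's. The boundedness and metrizability parts are fine. The closure argument, however, has two genuine gaps.

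First, your claim that for $\zeta \in \AF$ the integrand $(t,x,\FX) \mapsto \eta(t)\zeta(x)\cdot(\RHO/h)(\FX)$ lies in $\BBE$ is false: functions in $\AF$ may grow \emph{linearly} at infinity (see \eqref{E:AF}), so $\zeta$ is not in $\C(\dot\R^d)$ and the product is not in $\C(\dot\R^d\times\BFX)$. Consequently weak* convergence in $\BBE^*$ does \emph{not} directly pass the time-derivative terms $\int \eta'(t)\int \zeta\cdot\BM^k_t$ in the conservation laws to the limit, nor the compatibility \eqref{L:COMPATIBILITY} against such $\zeta$. Only the flux terms (paired with the \emph{bounded} $\nabla\zeta$) go through directly.

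Second, defining $(\RHO,\BM)$ from the limit Young measure via $\llbracket\RHO\rrbracket$, $\llbracket\BM\rrbracket$ yields objects defined only for \emph{a.e.}\ $t$, whereas Properties~\eqref{L:BOUNDS}, \eqref{L:LIPSCHITZ}, \eqref{L:VELOCITY}, \eqref{L:DATA} are required for \emph{all} $t$. Your argument that ``each bound $\int\zeta\cdot(\BM^k_{t_1}-\BM^k_{t_2})\LS L\bar{E}|t_1-t_2|$ survives the limit'' presupposes pointwise-in-$t$ convergence of $\BM^k_t$, which you have not established. Likewise the initial condition cannot be read off at $t=0$ from an a.e.-defined representative.

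The paper resolves both issues simultaneously in Lemma~\ref{L:CLOSED}: it applies the refined Arzel\`a--Ascoli theorem (Proposition~3.3.1 in \cite{AmbrosioGigliSavare2008}) to the equi-Lipschitz curves $t\mapsto(\RHO^k_t,\BM^k_t)$ to extract pointwise-in-$t$ narrow convergence for every $t\in[0,\infty)$. This gives a genuinely continuous limit curve, recovers the Lipschitz bounds and the initial data directly, and---combined with the uniform moment bounds \eqref{E:MT} and a truncation as in Lemma~\ref{L:MOMS}---handles the unbounded test functions from $\AF$.
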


\begin{proof}
It suffices to prove that every sequence of dissipative solutions $(\epsilon^k,
\nu^k) \in S$ admits a subsequence converging as in \eqref{E:YOUNC} to a pair
of Young measures $(\epsilon, \nu)$ that is again a dissipative solution in
$S$. We establish uniform boundedness of $\epsilon^k$ and $\nu^k$ in the
$\BBE^*$-norm and apply Banach-Alaoglu theorem. Let $\bar{E}, \bar{M}$ be the
total energy and second moment determined by $(\bar{\RHO}, \bar{\BM})$; see
\eqref{E:ENIN} and \eqref{E:MOIN}. Then
\[
	\|\epsilon^k(t,\cdot)\|_{\M(\DR^d\times\BFX)}
		= \int_{\DR^d\times\BFX} \epsilon^k(t,dx,d\FX)
		= \int_{\DR^d} \llbracket \RHO + \HA|\BM|^2/\RHO+U(\RHO) 
			\rrbracket_{\epsilon^k}(t,dx),
\]
which is bounded by $1+\bar{E}$ for every $t\in[0,\infty)$ uniformly in $k$,
because of Definition~\ref{D:DISSI}~\eqref{L:EPSILON}/\eqref{L:COMPATIBILITY}.
Recall that the pairing \eqref{E:YPAIR} is linear in $f$. It follows that
\[
	\sup_{k\in\N} \|\epsilon^k\|_{\BBE^*} \LS 1+\bar{E};
\]
see \eqref{E:ESTARBD}. Extracting a subsequence (not relabeled, for
simplicity), we obtain \eqref{E:YOUNC}, for some $\epsilon \in \BBE^*$. The same
argument applies to the Young measures $\nu^k$.

We can now use Lemma~\ref{L:CLOSED} to finish the proof.
\end{proof}

\begin{lemma}\label{L:CLOSED} Suppose that a sequence of dissipative solutions
$(\epsilon^k, \nu^k) \in S$ converges to a pair of Young measures $(\epsilon,
\nu)$ in the sense of \eqref{E:YOUNC}. Then $(\epsilon, \nu) \in S$. 
\end{lemma}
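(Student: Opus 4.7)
The plan is to verify each of the nine properties of Definition~\ref{D:DISSI} for the limit $(\epsilon, \nu)$, exploiting that every pairing \eqref{E:YPAIR} is \emph{linear} in the Young measure and that the integrands $f/h$ of interest belong to $\ALG$, hence extend continuously to $\BFX$. I would start by defining the candidate density and momentum via $\RHO_t(dx) := \lbrack \RHO \rbrack(t,dx)$ and $\BM_t(dx) := \lbrack \BM \rbrack(t,dx)$. Testing the weak* convergences $\epsilon^k \WEAK \epsilon$ and $\nu^k \WEAK \nu$ against functions $(t,x,\FX) \mapsto \eta(t)\phi(x)\RHO/h(\FX)$ and $\eta(t)\phi(x)\BM/h(\FX)$ with $\eta \in \C^1_c([0,\infty))$ and $\phi \in \C_*(\DR^d)$ (all of which lie in $\BBE$), and using that the pre-limit identities $\llbracket \RHO \rrbracket_{\epsilon^k} = \lbrack \RHO \rbrack_{\nu^k}$ hold for every $k$, yields the compatibility \eqref{E:COMPAT}. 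For the a priori bound \eqref{L:EPSILON}, the energies $E^k(t) = \|\epsilon^k_t\|-1$ are uniformly bounded by $\bar{E}$ and non-increasing, so Helly's selection theorem extracts a subsequence converging pointwise everywhere; by duality against $\C_c([0,\infty))$, this pointwise limit coincides a.e.\ with $E$ defined from $\epsilon$ via \eqref{E:TOTEN}. The inequality $N(t) \LS E(t)$ in \eqref{L:NU} passes to the limit in the same duality, and the moment bound \eqref{L:BOUNDS} is preserved by lower semicontinuity of the second moment under narrow convergence.

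Next, the Lipschitz continuity \eqref{L:LIPSCHITZ} of the limiting curves is preserved by the lower semicontinuity of $\WAS_2$ and $\|\cdot\|_{\MK(\R^d)}$ under narrow convergence: for fixed $s, t \in [0,\infty)$, the pre-limit inequalities $\WAS_2(\RHO^k_s, \RHO^k_t) \LS (2\bar{E})^{1/2}|t-s|$ and $\|\BM^k_s - \BM^k_t\|_{\MK(\R^d)} \LS L\bar{E}|t-s|$ transfer to the corresponding bounds for the limits upon passing to $\liminf$ along a subsequence where $\RHO^k_s, \RHO^k_t, \BM^k_s, \BM^k_t$ converge narrowly. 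The conservation laws \eqref{L:CONSLAWS} are linear in $(\epsilon^k, \nu^k)$; testing against $\eta \otimes \zeta$ with $\eta \in \C^1_c([0,\infty))$ and $\zeta \in \AF$ produces integrands $\eta(t)\zeta(x) \cdot \BM/h(\FX)$, $\eta(t)\nabla\zeta(x) : \BM\otimes\BM/(\RHO h(\FX))$ and $\eta(t)\nabla\cdot\zeta(x)\, P(\RHO)/h(\FX)$ that all lie in $\BBE$ because $\zeta$ and $\nabla\zeta$ extend continuously to $\DR^d$ (as $\zeta \in \AF$), so the equations pass to the limit. Attainment of the initial data \eqref{L:DATA} then follows from the Lipschitz (hence continuous) representative of $t \mapsto (\RHO_t, \BM_t)$ just obtained together with $(\RHO^k(0), \BM^k(0)) = (\bar{\RHO}, \bar{\BM})$ for every $k$.

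The decomposition \eqref{L:DECOMPOSITION} is the step that requires the most care, but it can be carried out exactly as in Step~4 of the proof of Proposition~\ref{P:EXISTENCE}: disintegrate $\nu_t$ to obtain a fibred family $\upsilon_{t,x}$ of Borel probability measures on $X_V$ matching the marginals $\RHO_t$ and $\BM_t$, then apply Jensen's inequality to the convex maps $\RHO \mapsto P(\RHO)$ and $(\RHO,\BM) \mapsto \BM\otimes\BM/\RHO$ to obtain the pointwise bounds \eqref{E:FIRS} and hence \eqref{E:LSS}; the non-negative defect measures $\DSC, \DSP$ are then the differences in \eqref{E:LSS}, and an identical construction applied to $\epsilon$ yields $\DDC, \DDP$. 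The Eulerian velocity \eqref{L:VELOCITY} is defined as the Radon-Nikodym derivative $\BU_t := d\BM_t/d\RHO_t$ (which makes sense because $\BM_t$ must vanish wherever the Lebesgue density $r_t$ of $\RHO_t$ vanishes, as otherwise $r_t \BU_t \otimes \BU_t$ could not be integrable), with the $\L^2(\R^d, \RHO_t)$-bound following from the decomposition together with \eqref{L:EPSILON}. The main technical point, rather than a true obstacle, is the strict-tightness/disintegration step used to produce $\upsilon_{t,x}$; but the diagonal argument with compact sets $\Omega_N = [0,N] \times \bar{B}_N(0)$ employed in Step~4 of Proposition~\ref{P:EXISTENCE} applies verbatim to the sequence $\nu^k$ indexed by $k \to \infty$, in place of the time-step-indexed sequence $\nu_\tau$ considered there.
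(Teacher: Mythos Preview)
Your overall strategy matches the paper's, but there is a concrete error in the passage of the conservation laws \eqref{L:CONSLAWS} to the limit. You claim that for $\zeta \in \AF$ both ``$\zeta$ and $\nabla\zeta$ extend continuously to $\DR^d$,'' and hence that the integrand $\eta'(t)\zeta(x)\cdot\BM/h(\FX)$ lies in $\BBE$. This is false: by definition \eqref{E:AF} only $\nabla\zeta$ is required to lie in $\C_*(\R^d)$, while $\zeta$ itself may grow linearly (the identity map $\zeta(x)=x$ is explicitly in $\AF$). Consequently $(t,x,\FX)\mapsto\eta'(t)\zeta(x)\cdot\BM/h(\FX)$ is \emph{not} in $\BBE=\L^1([0,\infty);\C(\DR^d\times\BFX))$, and the weak* convergence \eqref{E:YOUNC} does not by itself justify passing to the limit in the time-derivative term $\int\eta'(t)\int\zeta(x)\cdot\BM^k_t(dx)\,dt$ of the momentum equation (and similarly for the continuity equation).

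The paper fills this gap by first extracting, via the refined Arzel\`a--Ascoli theorem (Proposition~3.3.1 in \cite{AmbrosioGigliSavare2008}), a subsequence along which $\RHO^k_t\WEAK\RHO_t$ and $\BM^k_t\WEAK\BM_t$ narrowly \emph{for every} $t\in[0,\infty)$; this uses the uniform Lipschitz bounds \eqref{E:LIPWAS} and the uniform tightness from \eqref{E:MOMM}/\eqref{E:MT}. With pointwise-in-time convergence in hand, the unbounded-test-function term is handled by the truncation argument of Lemma~\ref{L:MOMS} together with dominated convergence; see \eqref{E:MOPP}. This pointwise extraction is also what makes the initial-data argument work: your route defines $\RHO_t,\BM_t$ from $\nu_t$ only for a.e.\ $t$, and without the Arzel\`a--Ascoli step there is no direct link between the Lipschitz representative at $t=0$ and the common value $(\bar\RHO,\bar\BM)$ of the approximating sequence. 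Once you insert this step, the rest of your outline (Helly for \eqref{L:EPSILON}, the $\eta_T$-test for \eqref{L:NU}, and the Jensen/disintegration argument of Step~4 of Proposition~\ref{P:EXISTENCE} for \eqref{L:DECOMPOSITION}) goes through as you describe.
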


\begin{proof}
We split the proof into two steps.

\medskip

\textbf{Step 1.} We will first establish the existence of curves $t \mapsto
(\RHO_t, \BM_t)$, for which we will check the properties of
Definition~\ref{D:DISSI}. Consider densities/momenta $(\RHO^k, \BM^k)$ that are
associated with the Young measures $(\epsilon^k, \nu^k)$ as in property (6) of
Definition~\ref{D:DISSI}. We will show that for a suitable subsequence (not
relabeled) we have
\begin{equation}
	\left.\begin{aligned}
		\RHO^k_t & \WEAK \RHO_t
\\
		\BM^k_t & \WEAK \BM_t
	\end{aligned} \, \right\}
	\quad\text{weak* in the sense of measures}
\label{E:POINTW}
\end{equation}
(testing against functions in $\CB(\R^d; \R^D)$), pointwise for all
$t\in[0,\infty)$.

In order to prove \eqref{E:POINTW}, we use the refined version of the
Arzel\`{a}-Ascoli theorem in Proposition~3.3.1 of
\cite{AmbrosioGigliSavare2008}. For any $T>0$ fixed, the curves $t \mapsto
\RHO_t^k$ have second moments bounded by \eqref{E:MOMM} for all $t\in[0,T]$,
uniformly in $k$. Therefore, for each such $t$, the set $\{ \RHO^k_t
\}_{k\in\N}$ is tight, hence narrowly precompact in $\SP(\R^d)$, by Prokhorov's
theorem. This implies in particular the weak* precompactness in the sense of
measures on $\R^d$. The curves are Lipschitz continuous with respect to the
Wasserstein distance, with Lipschitz constant bounded uniformly in $k$, because
of \eqref{E:LIPWAS}. Then Proposition~3.3.1 in \cite{AmbrosioGigliSavare2008}
establishes the convergence of a subsequence (not relabeled) of the $\RHO^k$ and
a limit density $\RHO$ with \eqref{E:POINTW} for every $t\in[0,T]$. The curve $t
\mapsto \RHO_t$ is again Lipschitz continuous with respect to the Wasserstein
distance, and estimates \eqref{E:MOMM} and \eqref{E:LIPWAS} still hold.
Repeating this argument for $T \to \infty$, extracting subsequences as
necessary, we obtain \eqref{E:POINTW} for all $t\in[0,\infty)$. Notice that
narrow convergence of measures in $\SP(\R^d)$ is metrizable (see Remark~5.1.1 of
\cite{AmbrosioGigliSavare2008}) and that the Wasserstein distance $\WAS$ is
lower semicontinuous with respect to narrow convergence; see (2.1.1) in
\cite{AmbrosioGigliSavare2008}.

We will now show that the limit density $\RHO_t$ is in fact uniquely determined
by the Young measure $\nu_t$ for a.e.\ $t\in [0,\infty)$. In particular, the
limit is independent of the particular choice of subsequence used in the
Arzelà-Ascoli compactness argument, and so the whole sequence converges for
a.e.\ $t$. Indeed, since the map
\[
	\big( t,x,(\RHO,\BM) \big) \mapsto
		\eta(t) \varphi(x) \, \RHO/h(\RHO,\BM)
\]
with $\eta \in \L^1([0,\infty))$ and $\varphi \in \C(\DR^d)$ is an element in
$\BBE$, we have
\begin{multline*}
	\int_0^\infty \eta(t) \int_{\DR^d} \varphi(x) \,
			\lbrack \RHO \rbrack_\nu(t,dx) \,dt
		= \lim_{k\to\infty} \int_0^\infty \eta(t) \int_{\DR^d} \varphi(x) \,
			\lbrack \RHO \rbrack_{\nu^k}(t,dx) \,dt
\\
		= \lim_{k\to\infty} \int_0^\infty \eta(t) \int_{\R^d} \varphi(x) 
			\, \RHO^k_t(dx) \,dt
		= \int_0^\infty \eta(t) \int_{\R^d} \varphi(x) \, \RHO_t(dx) \,dt.
\end{multline*}
For the second equality we have used that \eqref{E:COMPAT} holds for $\nu^k$;
the third identity follows from \eqref{E:POINTW} and dominated convergence.
Recall that $\RHO^k_t, \RHO_t$ have finite second moments, uniformly in $k$ and
$t$ bounded. Therefore the domain of integration can be restricted to $\R^d$.
Since $\eta, \varphi$ were arbitrary, we conclude that $\lbrack
\RHO\rbrack(t,dx) = \RHO_t(dx)$ for a.e.\ $t \in [0,\infty)$. The argument works
the same for $\epsilon^k$ and $\epsilon$.

For the momentum we argue similarly: On compact time intervals $[0,T]$, the
$\BM^k_t$ are bounded in $\M_t$ (see \eqref{E:MT}) uniformly in $k$, which
implies uniformly bounded first moments and therefore tightness/narrow
precompactness. On $\M_t$ the topology of narrow convergence coincides with the
one induced by the bounded Lipschitz norm; see Corollary~3.2 and Remark~3.2 in
\cite{HilleSzarekWormZiemlanska2021}. The curves $t \mapsto \BM_t$ are Lipschitz
continuous with respect to the bounded Lipschitz norm, with Lipschitz constant
bounded by \eqref{E:LIPWAS} uniformly in $k$. Then Proposition~3.3.1 in
\cite{AmbrosioGigliSavare2008} establishes convergence of a subsequence of the
$\BM^k$ (not relabeled) towards a limit momentum that is again Lipschitz
continuous. Also $\BM_t \in \M_t$ for all $t\in[0,T]$ and estimate
\eqref{E:LIPWAS} still holds. We let $T\to \infty$ and extract further
subsequence as needed to obtain \eqref{E:POINTW}. To show that we also have
convergence when testing against functions in $\AF$, which may grow linearly at
infinity, we can adapt the truncation argument of Lemma~\ref{L:MOMS}.

We can now argue as before to show that the limit momentum $\BM_t$ is uniquely
determined by the Young measures $(\nu, \epsilon)$ for a.e.\ $t\in[0,\infty)$.
In particular, density and momentum $(\RHO,\BM)$ satisfy the compatibility
condition in Property~\eqref{L:COMPATIBILITY} of Definition~\ref{D:DISSI}. Since
all $(\RHO^k, \BM^k)$ have the same initial data $(\bar{\RHO}, \bar{\BM})$, the
convergence \eqref{E:POINTW} with $t=0$ implies that $(\RHO, \BM)$ also has
initial data $(\bar{\RHO}, \bar{\BM})$, which proves Property~\eqref{L:DATA} of
Definition~\ref{D:DISSI}. Lipschitz continuity in \eqref{L:LIPSCHITZ} has
already been discussed, and the a priori bounds \eqref{L:BOUNDS} and
\eqref{E:SPACES} can be derived from weak* precompactness and lower
semicontinuity of the internal and kinetic energies; see also
Remark~\ref{R:ENEBO}.

\medskip

\textbf{Step~2.} Property~\eqref{L:VELOCITY} of Definition~\ref{D:DISSI} follows
from lower semicontinuity of 
\begin{equation}
	(\RHO_t, \BM_t) \mapsto \sup_{\zeta\in\CB(\R^d; \R^d)}
		\int_{\R^d} \Big( \zeta(x)\cdot \BM_t(dx) 
			-\HA|\zeta(x)|^2 \,\RHO_t(dx) \Big),
\label{E:KINET}
\end{equation}
which represents the kinetic energy, under weak* convergence of measures. Notice
that the functional \eqref{E:KINET} gives $+\infty$ if $\BM_t$ is not absolutely
continuous with respect to $\RHO_t$. Then we use the weak* convergence
\eqref{E:POINTW} for every $t \in [0,\infty)$.

Property~\eqref{L:EPSILON} of Definition~\ref{D:DISSI} follows from
\eqref{E:YOUNC} and Helly's selection theorem for sequences of monotone
functions; see also Step~3 in the proof of Proposition~\ref{P:EXISTENCE}. To
prove Property~\eqref{L:NU} we observe that the weak* convergence of the Young
measures in $\BBE^*$ implies weak* convergence in $\L^\infty([0,\infty))$ of
$E^k$ and $N^k$, which are defined as in \eqref{E:TOTEN} and \eqref{E:NEN} with
$\epsilon^k, \nu^k$ in place of $\epsilon, \nu$. Indeed the map
\[
	\big( t,x,(\RHO,\BM) \big) 
		\mapsto \eta(t) \, \Big( \HA|\BM|^2/\RHO + U(\RHO) \Big) / h(\RHO,\BM)
\]
with $\eta \in \L^1([0,\infty))$ is an element in $\BBE$ (no $x$-dependence).
Choosing test functions $\eta_T := \ONE_{\{N>E\} \cap [0,T]}$, which are in
$L^1([0,\infty))$ for any $T \in \N$, we find that
\begin{align*}
	0 & \LS \int_0^\infty \eta_T(t) \big( N(t)-E(t) \big) \,dt 
\\
		& \LS \lim_{k\to \infty} \bigg( 
				\int_0^\infty \eta_T(t) \big( N(t)-N^k(t) \big) \,dt
				- \int_0^\infty \eta_T(t) \big( E(t)-E^k(t) \big) \,dt
			\bigg) = 0
\end{align*}
because $N^k(t) \LS E^k(t)$ for all $k\in\N$ and a.e.\ $t\in[0,\infty)$, and
$\eta_T \GS 0$. Thus 
\[
	N(t) \LS E(t)
	\quad\text{for a.e.\ $t\in[0,\infty)$.}
\]
Recall that the energy bounds imply the function space inclusions
\eqref{E:SPACES}.

Finally, we observe that for $\eta \in C^1_c([0,\infty))$ and $\zeta \in \AF$
\begin{align*}
	& \int_0^\infty \eta(t) \int_{\DR^d} \nabla\zeta(x) : \Big( 
		\lbrack \BM\otimes\BM/\RHO \rbrack_{\nu^k}(t,dx) 
			+ \llbracket P(\RHO) \rrbracket_{\epsilon^k}(t,dx) \Big) \,dt
\\
	& \qquad \longrightarrow
		\int_0^\infty \eta(t) \int_{\DR^d} \nabla\zeta(x) : \Big( 
			\lbrack \BM\otimes\BM/\RHO \rbrack_\nu(t,dx)
				+ \llbracket P(\RHO) \rrbracket_\epsilon(t,dx) \Big) \,dt
\end{align*}
as $k\to\infty$ because the map
\[
	\big( t, x, (\RHO, \BM) \big) \mapsto \eta(t) \nabla\zeta(x) :
		\big( \BM\otimes\BM/\RHO \big)/h(\RHO,\BM)
\]
belongs to the space $\BBE$; see \eqref{E:BBE}. Then we use \eqref{E:YOUNC}
again. The same argument works for the pressure term. On the other hand, we have
that
\begin{equation}
	\int_0^\infty \eta'(t) \int_{\DR^d} \zeta(x) \cdot \BM^k_t(dx) \,dt
		\longrightarrow \int_0^\infty \eta'(t) \int_{\DR^d} 
			\zeta(x) \cdot \BM_t(dx) \,dt
\label{E:MOPP}
\end{equation}
as $k\to\infty$. Indeed a truncation argument like the one in the proof of
Lemma~\ref{L:MOMS} and \eqref{E:POINTW} establishes convergence of the spatial
integrals in \eqref{E:MOPP} for a.e.\ $t\in[0,\infty)$; recall that the momentum
$\BM_t$ is in $\M_t$, which is defined in \eqref{E:MT}. Since the spatial
integrals are bounded uniformly in $t, k$, by Cauchy-Schwarz inequality and
\eqref{E:ENRGY}/\eqref{E:MOMM}, we can then pass to the limit in \eqref{E:MOPP}
using dominated convergence. Recall that $\eta$ has compact support in
$[0,\infty)$. This proves that $(\epsilon, \nu)$ and $\BM$ satisfy the momentum
equation in \eqref{E:DISSI} in duality with $\C^1_c([0,\infty)) \otimes \AF$.
The continuity equation can be handled in a similar way, which proves
Property~\eqref{L:CONSLAWS} of Definition~\ref{D:DISSI}. 

Finally, to prove Property~\eqref{L:DECOMPOSITION} we argue as in Step~4 of the
proof of Proposition~\ref{P:EXISTENCE}: We introduce weakly measurable maps
$\upsilon^k \colon [0,\infty)\times \R^d \longrightarrow \SP(X_V)$ with
\[
	\int_0^\infty \int_{\R^d} \phi(t,x) \int_{X_V} \varphi(\FX) 
			\,\upsilon^k_{t,x}(d\FX) \,dx \,dt 
		:= \int_0^\infty \int_{\R^d} \phi(t,x) \varphi\big(
			r^k_t(x), m^k_t(x) \big) \,dx \,dt
\]
for all $\phi \in \C_c([0,\infty)\times\R^d)$ and $\varphi \in \C_c(X_V)$. Here
$(r^k, m^k)$ is the Lebesgue density of density/momentum $(\RHO^k, \BM^k)$. As
before, one can show that the sequence of $\upsilon^k$ admits a weak* converging
subsequence (not relabeled) such that
\begin{align*}
	& \int_0^\infty \int_{\R^d} \phi(t,x) \varphi\big(
		r^k_t(x), m^k_t(x) \big) \,dx \,dt
\\
	& \qquad
		\longrightarrow \int_0^\infty \int_{\R^d} \phi(t,x) 
			\int_{X_V} \varphi(\FX) \,\upsilon_{t,x}(d\FX) \,dx \,dt 
	\quad\text{as $k\to\infty$}
\end{align*}
for all $\phi \in \C_c([0,\infty)\times\R^d)$ and $\varphi\in\C_c(X_V)$, where
$\upsilon \colon [0,\infty)\times \R^d \longrightarrow \SP(X_V)$ is some weakly
measurable map. This limit $\upsilon$ is uniquely determined by identity
\eqref{E:EQUL}, which proves in particular that the whole sequence converges.
Again we have \eqref{E:IDES} and \eqref{E:FIRS}, by Jensen's inequality.
Combining this with \eqref{E:EQLL}, we get \eqref{E:LSS} in the sense of
symmetric matrices. The difference between the left- and right-hand sides of
\eqref{E:LSS} define the defect measures $\DSC, \DSP$ in
Definition~\ref{D:DISSI}~\eqref{L:DECOMPOSITION}. For the second Young measure
$\epsilon$ and $\llbracket \cdot \rrbracket$ the argument works the same.
\end{proof}

For $(\epsilon, \nu) \in S$ we define the set of predecessors
\[
	P(\epsilon, \nu) := \big\{ (\tilde\epsilon, \tilde\nu) \in S \colon
		(\tilde\epsilon, \tilde\nu) \PREC (\epsilon, \nu) \big\},
\]
where $\PREC$ is the quasi-order introduced in Definition~\ref{D:QO}.

\begin{lemma}
For every $(\epsilon, \nu) \in S$ the set $P(\epsilon, \nu)$ is closed under
\eqref{E:YOUNC}.
\end{lemma}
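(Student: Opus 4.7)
The plan is to take any sequence $(\tilde\epsilon^k, \tilde\nu^k) \in P(\epsilon, \nu)$ converging to $(\tilde\epsilon, \tilde\nu)$ in the sense of \eqref{E:YOUNC}. By Lemma~\ref{L:CLOSED} the limit lies in $S$, so the only thing to verify is that $(\tilde\epsilon, \tilde\nu) \PREC (\epsilon, \nu)$, i.e., the pointwise bound $a(t|\tilde\epsilon, \tilde\nu) \LS a(t|\epsilon, \nu)$ for a.e.\ $t$. The obstacle is that weak* convergence of Young measures does not directly give pointwise convergence of the acceleration profiles, so the pointwise inequalities cannot be passed to the limit naively. I would circumvent this by reformulating the $a$-functional through Lemma~\ref{L:IDE} and testing weak* convergence against suitable elements of the predual $\BBE$.

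Concretely, Lemma~\ref{L:IDE} together with the pairing \eqref{E:YPAIR} gives, for every dissipative solution $(\tilde\mu, \tilde\omega) \in S$,
\[
a(t|\tilde\mu, \tilde\omega)
    = \int_{\DR^d\times\BFX} \frac{|\BM|^2/\RHO}{h(\RHO,\BM)} \,\tilde\omega_t(dx, d\FX)
        + d \int_{\DR^d\times\BFX} \frac{P(\RHO)}{h(\RHO,\BM)} \,\tilde\mu_t(dx, d\FX).
\]
Both integrands lie in $\W(X)$, obtained from the choices $c_K = \ONE$ and $c_U = 1$, respectively, and so extend continuously and boundedly to the compactification $\BFX$. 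Consequently, for any $\eta \in \L^1([0,\infty))$ the maps $(t,x,\FX) \mapsto \eta(t) (|\BM|^2/\RHO)/h$ and $(t,x,\FX) \mapsto \eta(t) P(\RHO)/h$ belong to $\BBE$; see \eqref{E:BBE}. The weak* convergence \eqref{E:YOUNC} then immediately yields
\[
\int_0^\infty \eta(t) \, a(t|\tilde\epsilon^k, \tilde\nu^k) \,dt
    \longrightarrow \int_0^\infty \eta(t) \, a(t|\tilde\epsilon, \tilde\nu) \,dt
    \quad\text{as $k\to\infty$,}
\]
which amounts to weak* convergence of the acceleration profiles in $\L^\infty([0,\infty))$.

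To conclude, I would fix an arbitrary non-negative $\eta \in \L^1([0,\infty))$ and use $(\tilde\epsilon^k, \tilde\nu^k) \PREC (\epsilon, \nu)$ to write
\[
\int_0^\infty \eta(t) \big( a(t|\epsilon, \nu) - a(t|\tilde\epsilon^k, \tilde\nu^k) \big) \,dt \GS 0
\quad\text{for every $k \in \N$.}
\]
Passing $k \to \infty$ via the weak* convergence above preserves the inequality, and since non-negative $\eta \in \L^1([0,\infty))$ is arbitrary we deduce $a(t|\tilde\epsilon, \tilde\nu) \LS a(t|\epsilon, \nu)$ for a.e.\ $t$, hence $(\tilde\epsilon, \tilde\nu) \in P(\epsilon, \nu)$.

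The main obstacle is the first step: verifying that the integrands representing the acceleration actually define elements of $\C(\BFX)$, rather than merely continuous bounded functions on the open state space $X$. This is what makes the choice of compactification based on $\W(X)$ useful here, since the trace of the momentum flux and the pressure (after normalization by $h$) are built into $\W(X)$ by design. Once this identification is in place, the rest of the argument reduces to a standard ``weak convergence preserves a.e.\ inequalities against non-negative test functions'' observation.
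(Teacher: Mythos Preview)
Your proposal is correct and follows essentially the same route as the paper: invoke Lemma~\ref{L:CLOSED} for membership in $S$, observe that the normalized integrands $|\BM|^2/\RHO\,/h$ and $P(\RHO)/h$ lie in $\W(X)$ so that $\eta(t)$ times these are elements of $\BBE$, deduce weak* convergence of $a(\cdot|\tilde\epsilon^k,\tilde\nu^k)$ in $\L^\infty([0,\infty))$, and pass the a.e.\ inequality to the limit by testing. The only cosmetic difference is in the last step: the paper tests against the single indicator $\eta_T = \ONE_{\{a(\cdot|\tilde\epsilon,\tilde\nu) > a(\cdot|\epsilon,\nu)\}\cap[0,T]}$, whereas you test against all non-negative $\eta \in \L^1$; both are standard and equivalent.
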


\begin{proof}
Because of Lemma~\ref{L:CLOSED}, we know that if $(\epsilon^k, \nu^k) \in S$
converges to $(\tilde\epsilon, \tilde\nu)$ in the sense of \eqref{E:YOUNC}, then
$(\tilde\epsilon, \tilde\nu) \in S$. Suppose that $(\epsilon^k, \nu^k) \PREC
(\epsilon, \nu)$, i.e., that
\begin{equation}
	a(t|\epsilon^k, \nu^k) \LS a(t|\epsilon, \nu)
	\quad\text{for a.e.\ $t\in [0,\infty)$}
\label{E:INEQQ}
\end{equation}
and all $k\in\N$. We argue as in the proof of Lemma~\ref{L:CLOSED}: Since the
maps
\[
	\big( t,x, (\RHO,\BM) \big) \mapsto
		\eta(t) \, \begin{Bmatrix} \BM\otimes\BM/\RHO \\ P(\RHO) \end{Bmatrix} / h(\RHO, \BM) 
\]
with $\eta \in \L^1([0,\infty))$ belong to $\BBE$ (no $x$-dependence),
\eqref{E:YOUNC} implies that
\begin{equation}
	a(\cdot|\epsilon^k, \nu^k) \longrightarrow a(\cdot| \tilde\epsilon, \tilde\nu)
	\quad\text{weak* in $\L^\infty\big( [0,\infty) \big)$.}
\label{E:WEAKST}
\end{equation}
Choosing test functions $\eta_T := \ONE_{\{a(\cdot|\tilde\epsilon, \tilde\nu) >
a(\cdot|\epsilon, \nu)\} \cap [0,T]}$, which are in $\L^1([0,\infty))$ for any
$T\in \N$, we use inequality \eqref{E:INEQQ} and \eqref{E:WEAKST} to obtain the
estimate
\begin{align*}
	0 & \LS \int_0^\infty \eta_T(t) \big( a(t|\tilde\epsilon, \tilde\nu)
		- a(t|\epsilon, \nu) \big) \,dt 
\\
		& \LS \lim_{k\to \infty}
				\int_0^\infty \eta_T(t) \big( a(t|\tilde\epsilon, \tilde\nu) 
					- a(t|\epsilon^k, \nu^k) \big) \,dt
			= 0.
\end{align*}
It follows that $a(t|\tilde\epsilon, \tilde\nu) \LS a(t|\epsilon, \nu)$ for
a.e.\ $t\in[0,\infty)$, and thus $(\tilde\epsilon, \tilde\nu) \in P(\epsilon,
\nu)$.
\end{proof}

Applying Theorem~\ref{T:WARD}, we then obtain our main result
Theorem~\ref{T:SELECTION}.


\section*{Data availability statement}

Data sharing not applicable to this article as no datasets were generated or
analysed during the current study.

\printbibliography


\end{document}